\theoremstyle{definition}
\newtheorem{definition}{Definition}
\theoremstyle{plain}
\newtheorem{theorem}{Theorem}
\newtheorem{lemma}{Lemma}
\newtheorem{corollary}{Corollary}
\newcommand{\F}{\mathcal{F}}
\title{Cartesian lattice counting by the vertical 2-sum}
\begin{document}

\author{Jukka Kohonen}
\address{Department of Mathematics and Systems Analysis\\ Aalto University, Finland}
\email{jukka.kohonen@iki.fi}
\keywords{Counting, vertical 2-sum, modular lattices, distributive lattices}

\begin{abstract}
  A~vertical 2-sum of a two-coatom lattice~$L$ and a two-atom
  lattice~$U$ is obtained by removing the top of~$L$ and the bottom
  of~$U$, and identifying the coatoms of~$L$ with the atoms of~$U$.
  This operation creates one or two nonisomorphic lattices depending
  on the symmetry case.  Here the symmetry cases are analyzed, and a
  recurrence relation is presented that expresses the number of such
  vertical 2-sums in some family of interest, up to isomorphism.
  Nonisomorphic, vertically indecomposable modular and distributive
  lattices are counted and classified up to 35 and 60 elements
  respectively.  Asymptotically their numbers are shown to be at least
  $\Omega(2.3122^n)$ and $\Omega(1.7250^n)$, where $n$~is the number
  of elements.  The number of semimodular lattices is shown to grow
  faster than any exponential in~$n$.
\end{abstract}

\maketitle

\section{Introduction}
\label{sec:intro}

Let~$L$ and~$U$ be finite lattices.  Their \emph{vertical sum}
is~obtained by identifying the top of~$L$ with the bottom of~$U$.  If
$L$~has two~coatoms and $U$~has two~atoms, their \emph{vertical~2-sum}
is obtained by removing the top of~$L$ and the bottom of~$U$, and
identifying the coatoms of~$L$ with the atoms of~$U$.

The vertical sum leads to a simple and well-known recurrence relation.
A~lattice is a \emph{vi-lattice} (short for \emph{vertically
  indecomposable}) if it is not a vertical sum of two non-singleton
lattices.  If $f(n)$ and $f_\mathrm{vi}(n)$ are the numbers of nonisomorphic
$n$-element lattices and vi-lattices, respectively, then
\begin{equation}
  f(n) = \sum_{k=2}^n f_\mathrm{vi}(k) \, f(n-k+1), \qquad \text{for $n \ge 2$.}
  \label{eq:virule}
\end{equation}
We call this \emph{Cartesian counting} because each term expresses the
cardinality of a Cartesian product, namely, of the set of $k$-element
vi-lattices and the set of $(n\!-\!k\!+\!1)$-element lattices.  One
does not need to list the members of a Cartesian product to find its
cardinality.  Recurrence~\eqref{eq:virule} has been used in counting
small
lattices~\cite{erne2002,gebhardt2020,heitzig2002,jipsen2015,kohonen2018}
and in proving lower bounds~\cite{erne2002,kohonen2019}.

Many vi-lattices can be further decomposed as vertical 2-sums of
smaller lattices.  So let us pursue a~kind of Cartesian counting of
vi-lattices.  Now we must observe that from two given lattices, one
obtains two vertical 2-sums, because there are two ways to~match the
coatoms and the atoms.  Whether the results are isomorphic depends on
the symmetries of~$L$ and~$U$.

Our main result, Theorem~\ref{thm:main}, is a recurrence relation that
distinguishes the symmetry cases, and expresses the exact number of
nonisomorphic lattices obtainable as vertical 2-sums.  To apply the
recurrence, we need to classify and count the component lattices by
symmetry type.

The motivations of this study are threefold.  First, our recurrence
provides a new way of counting small lattices.  Only the component
lattices are generated explicitly; their vertical 2-sums are then
counted with the recurrence in the Cartesian fashion.  This is faster,
so we can count further.  We count modular and distributive
vi-lattices of at most 35 and 60 elements, respectively.  This also
provides a verification of previous countings (of at most 33 and 49
elements, respectively), because the method is different.

The second motivation is a more compact lattice listing.  A~full
listing of distributive vi-lattices of at most $60$ elements would
contain about $4.9 \times 10^{12}$ lattices.  We~can shrink the list
to less than $1/200$ of that size, to $2.3 \times 10^{10}$ lattices,
by leaving~out all vertical 2-sums.  A~smaller listing is more
practical to~store and to~study, and one can still recover the
left-out lattices by performing the vertical~2-sums.

The third motivation is in improving lower bounds.  A simple
recurrence for vertical 2-sums was derived in~\cite{kohonen2019}, but
it is only a loose lower bound as it does~not consider the symmetry
cases.  The new recurrence gives tighter bounds because of an extra
factor of~$2$ in the asymmetric cases.  It may not sound much, but the
factor compounds when vertical 2-sum is applied repeatedly.  Some
further improvement comes from counting small lattices larger than
before.  For nonisomorphic modular vi-lattices, we improve the lower
bound from $\Omega(2.1562^n)$~\cite{kohonen2019} to
$\Omega(2.3122^n)$.  For nonisomorphic distributive vi-lattices, we
improve from $\Omega(1.678^n)$~\cite{erne2002} to $\Omega(1.7250^n)$,
which is close to the empirical growth rate.

\section{Vertical 2-sum and symmetry}
\label{sec:symmetry}

In order to understand how the vertical 2-sum operates on lattices, we
classify them by the number and symmetry of their atoms and coatoms.
Our aim is in modular and distributive vi-lattices, but we state the
results more generally when convenient.  All lattices considered in
this work are finite.  If~$L$ is a~lattice, we write $a(L)$ and $c(L)$
for the numbers of its atoms and coatoms, and $0_L$~and~$1_L$ for its
bottom and top.

\begin{definition}
  Let $L$ and $U$ be disjoint lattices of length~3 or greater, $L$
  with two coatoms $c_1,c_2$ and $U$ with two atoms $a_1,a_2$.  Then
  their \emph{vertical 2-sums} are the two lattices obtained by
  removing $1_L$ and $0_U$, and identifying $(c_1,c_2)$ with either
  $(a_1,a_2)$ or $(a_2,a_1)$.  $L$ and $U$ are the \emph{summands} of
  the vertical 2-sum.
\end{definition}

Note that vertical 2-sums are indeed lattices, those of graded
lattices are graded, and those of vi-lattices are
vi-lattices~\cite{kohonen2019}.  We do not consider summands of
length~2 as~that would be essentially an identity operation.  If $S$
is a vertical 2-sum of $L$~and~$U$, then $|S| = |L|+|U|-4$.

\begin{definition}
  If a lattice has two coatoms [atoms], they are \emph{symmetric} if
  the lattice has an automorphism that swaps them, and \emph{fixed}
  otherwise.
\end{definition}

\begin{lemma}
  Let $L$ and $U$ be lattices with vertical 2-sums $S_1$~and~$S_2$.
  Then $S_1$~and~$S_2$ are nonisomorphic if and only if $L$~has fixed
  coatoms and $U$~has fixed atoms.
  \label{lemma:number}
\end{lemma}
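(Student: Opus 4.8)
The plan is to prove the logically equivalent statement that $S_1 \cong S_2$ if and only if $L$ has symmetric coatoms or $U$ has symmetric atoms, treating the two implications separately. Throughout, write $m_1,m_2$ for the identified pair in $S_1$ (so $m_i$ is $c_i$ viewed in $L\setminus\{1_L\}$ and $a_i$ viewed in $U\setminus\{0_U\}$), and $m_1',m_2'$ for the identified pair in $S_2$, where now $m_1'=c_1=a_2$ and $m_2'=c_2=a_1$. Note that $S_1$ carries the ideal $I_1=\downarrow m_1\cup\downarrow m_2$, order-isomorphic to $L\setminus\{1_L\}$ with maximal elements $m_1,m_2$, and the filter $F_1=\uparrow m_1\cup\uparrow m_2$, order-isomorphic to $U\setminus\{0_U\}$, with $I_1\cap F_1=\{m_1,m_2\}$ and $I_1\cup F_1=S_1$; likewise for $S_2$.

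For the easy implication, suppose $L$ has symmetric coatoms, so there is an automorphism $\sigma$ of $L$ with $\sigma(c_1)=c_2$. Since $\sigma$ fixes $1_L$, it restricts to an order-automorphism of $L\setminus\{1_L\}$ swapping $m_1,m_2$. Define $\phi\colon S_1\to S_2$ by $\phi=\sigma$ on the $L$-part and $\phi=\mathrm{id}$ on the $U$-part. This is well defined on the glued pair: the element $m_1=c_1=a_1$ is sent to $c_2$ on the $L$-side and to $a_1$ on the $U$-side, and in $S_2$ one has $c_2=a_1=m_2'$, so the two prescriptions agree; similarly $m_2\mapsto m_1'$. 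A short check that $\phi$ respects the comparabilities across the two parts—swapping the coatoms of $L$ exactly compensates for the swapped gluing in $S_2$—shows $\phi$ is an order isomorphism, hence a lattice isomorphism. The case where $U$ has symmetric atoms is dual, using an atom-swapping automorphism $\tau$ of $U$ together with the identity on $L$.

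For the converse, suppose $\psi\colon S_1\to S_2$ is an isomorphism; the goal is to exhibit a symmetry of $L$ or of $U$. The key step is that $\psi$ must carry the gluing pair $\{m_1,m_2\}$ onto $\{m_1',m_2'\}$. When the summands are graded—in particular in the modular and distributive cases of interest—this follows from a height count: if $\ell=\mathrm{length}(L)$, then the elements of $S_1$ at height $\ell-1$ are exactly the coatoms of the lower summand, namely $m_1,m_2$, and the same holds in $S_2$ for $m_1',m_2'$; since any lattice isomorphism preserves the rank function of a graded lattice, $\psi(\{m_1,m_2\})=\{m_1',m_2'\}$. Consequently $\psi(I_1)=I_2$ and $\psi(F_1)=F_2$, so after the identifications $I_1\cong L\setminus\{1_L\}\cong I_2$ and $F_1\cong U\setminus\{0_U\}\cong F_2$ the restriction of $\psi$ induces an automorphism of $L$ (extend by fixing $1_L$) and an automorphism of $U$ (extend by fixing $0_U$).

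It remains to run the two sub-cases. If $\psi(m_1)=m_1'$ and $\psi(m_2)=m_2'$, then on the $L$-side $\psi$ fixes $c_1$ and $c_2$, while on the $U$-side it sends $a_1\mapsto a_2$ and $a_2\mapsto a_1$ (because $m_1'=a_2$ and $m_2'=a_1$); the induced $U$-automorphism swaps the atoms, so $U$ has symmetric atoms. If instead $\psi(m_1)=m_2'$ and $\psi(m_2)=m_1'$, the same bookkeeping shows the induced $L$-automorphism swaps $c_1,c_2$, so $L$ has symmetric coatoms. Either way we contradict the assumption of fixed coatoms and fixed atoms, completing the converse. The main obstacle is precisely the canonicity of the gluing pair asserted in the key step: the height argument disposes of the graded case cleanly, whereas for an arbitrary lattice one must instead characterize $\{m_1,m_2\}$ purely order-theoretically—as a two-element antichain to which every element is comparable and whose ideal and filter split $S$ as above—and verify that such a pair is unique, which is where the length-$\ge 3$ hypothesis on both summands enters.
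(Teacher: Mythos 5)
Your overall skeleton matches the paper's proof: one direction by extending a coatom-swapping automorphism of $L$ (or an atom-swapping automorphism of $U$) with the identity on the other summand, and the converse by arguing that an isomorphism $\psi\colon S_1 \to S_2$ must carry the identified pair onto the identified pair, whence it either fixes both neck elements (forcing an atom swap in $U$) or swaps them (forcing a coatom swap in $L$). Your bookkeeping in the two sub-cases is exactly the paper's ``fix the coatoms of~$L$ and swap the atoms of~$U$, or vice versa.'' You are in fact more forthcoming than the paper, which asserts the neck-preservation step without justification; your rank argument for graded summands is correct (the level at height $\mathrm{length}(L)-1$ of $S_i$ consists precisely of the two identified elements, and isomorphisms of graded lattices preserve rank), and it suffices for everything the paper subsequently does with the lemma, since the paper confines itself to graded vi-lattices immediately afterwards.

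There is, however, a genuine gap relative to the lemma as stated, which concerns arbitrary finite lattices of length at least~3 with no gradedness hypothesis. Your proof of the key step covers only the graded case, and the repair you sketch for the general case is unsound: a pair satisfying your order-theoretic characterization---a two-element antichain to which every element is comparable, splitting $S$ into an ideal and a filter meeting exactly in the pair---need \emph{not} be unique, even when both summands have length exactly~3. The hexagon with covers $0 \prec a \prec c \prec 1$ and $0 \prec b \prec d \prec 1$ is a counterexample: both $\{b,c\}$ and $\{a,d\}$ are such splitting pairs, each exhibiting the hexagon as a vertical 2-sum of two pentagons, and the chain-swapping automorphism $a \leftrightarrow b$, $c \leftrightarrow d$ even carries one pair to the other. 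So in the non-graded setting the gluing pair is not canonical, uniqueness cannot be the mechanism by which the length hypothesis enters, and your final sentence is false as stated; one would instead have to analyze how two splitting pairs can interlock and check that even a neck-moving isomorphism $S_1 \to S_2$ still yields the required symmetry of $L$ or of $U$. In short: your proposal is correct and complete for graded summands (the case the paper actually uses), but the converse direction of the lemma in its stated generality remains unproved in your write-up, and the route you propose for closing it does not work.
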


\begin{proof}
  If $L$ has an automorphism that swaps the coatoms, then extending it
  with the identity mapping on $U$ yields an isomorphism $S_1 \to
  S_2$.  If $U$ has symmetric atoms, the case is similar.  Finally, if
  there is an isomorphism $S_1 \to S_2$, it must either fix the
  coatoms of~$L$ and swap the atoms of~$U$, or vice versa; but this is
  impossible if $L$ has fixed coatoms and $U$ has fixed atoms.
\end{proof}

From here on we confine our attention to graded vi-lattices.  We
divide them into three kinds as follows.

\begin{definition}
  If $L$ is a graded vi-lattice, then its $k$th \emph{level}, denoted
  $L_k$, is the set of elements that have rank~$k$.  A~\emph{neck} is
  a two-element level other than the atoms and the coatoms.  We say
  that $L$ is
  \begin{enumerate}
  \item a \emph{composition}, if it contains a neck;
  \item a \emph{piece}, if does not contain a neck, has rank~3 or
    greater, and at least one of $a(L)$ and $c(L)$ equals two;
  \item \emph{special} otherwise.
  \end{enumerate}
  \label{def:mainclass}
\end{definition}

A composition has necessarily at least 8~elements, and a piece has
at least~6.  All compositions ensue from pieces by repeated
application of the vertical 2-sum.  Specials are not vertical 2-sums,
but also cannot act as their summands, because they are too short
(rank two or smaller) or contain too many atoms and coatoms.

\begin{definition}
  A piece $L$ is:
  \begin{enumerate}
  \item a \emph{middle piece}, if $a(L)=c(L)=2$;
  \item a \emph{bottom piece}, if $a(L) \ge 3$ and $c(L) = 2$;
  \item a \emph{top piece}, if $a(L) = 2$ and $c(L) \ge 3$.
  \end{enumerate}
\end{definition}

A~middle piece can act as either summand of a vertical 2-sum.
A~bottom piece can act only as the lower summand, and a~top piece only
as the upper summand.

\begin{definition}
  A middle piece is of symmetry type:
  \begin{enumerate}
  \item MF, if its atoms and coatoms are fixed;
  \item MA, if its atoms are symmetric and coatoms are fixed;
  \item MC, if its atoms are fixed and coatoms are symmetric;
  \item MX, if it has an automorphism that swaps the atoms but fixes
    the coatoms, and another automorphism that swaps the coatoms but fixes
    the atoms;
  \item MH, if it is not MX, but has an automorphism that swaps both
    the atoms and the coatoms.
  \end{enumerate}
\end{definition}

\begin{definition}
  A bottom piece is of symmetry type:
  \begin{enumerate}
  \item BF, if its coatoms are fixed;
  \item BS, if its coatoms are symmetric.
  \end{enumerate}
\end{definition}

\begin{definition}
  A top piece is of symmetry type:
  \begin{enumerate}
  \item TF, if its atoms are fixed;
  \item TS, if its atoms are symmetric.
  \end{enumerate}
\end{definition}

\begin{figure}
  \subcaptionbox*{MF}
  {\includegraphics[height=0.23\textwidth]{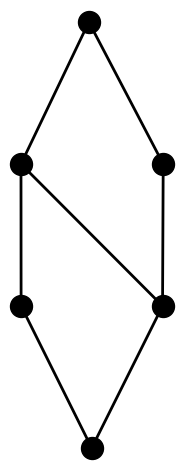}}
  \hspace{2em}
  \subcaptionbox*{MA}
  {\includegraphics[height=0.23\textwidth]{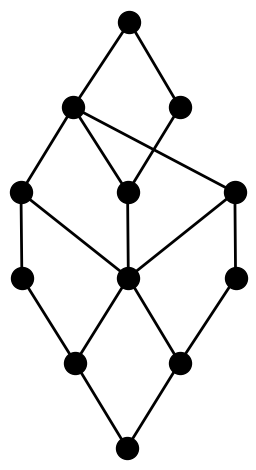}}
  \hspace{2em}
  \subcaptionbox*{MC}
  {\includegraphics[height=0.23\textwidth]{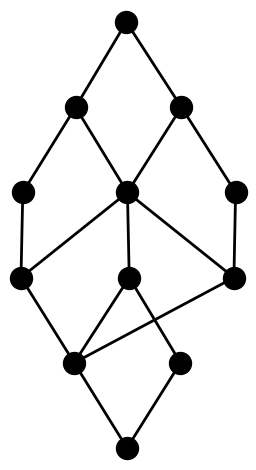}}
  \hspace{2em}
  \subcaptionbox*{MX}
  {\includegraphics[height=0.23\textwidth]{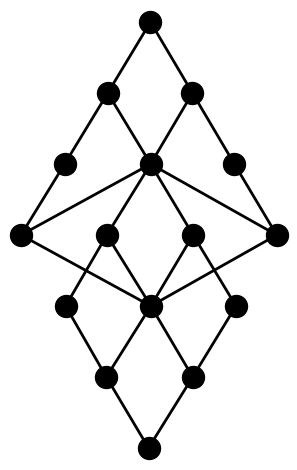}}
  \hspace{2em}
  \subcaptionbox*{MH}
  {\includegraphics[height=0.23\textwidth]{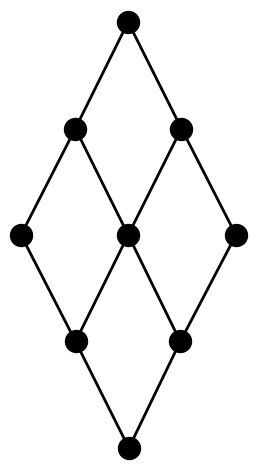}}
  \\[3ex]
  \subcaptionbox*{BF}
  {\includegraphics[height=0.23\textwidth]{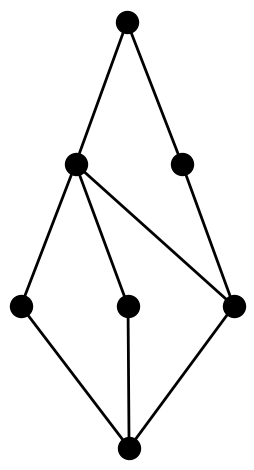}}
  \hspace{2em}
  \subcaptionbox*{BS}
  {\includegraphics[height=0.23\textwidth]{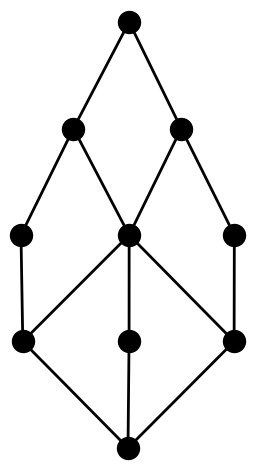}}
  \hspace{2em}
  \subcaptionbox*{TF}
  {\includegraphics[height=0.23\textwidth]{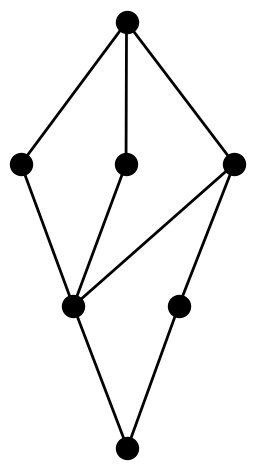}}
  \hspace{2em}
  \subcaptionbox*{TS}
  {\includegraphics[height=0.23\textwidth]{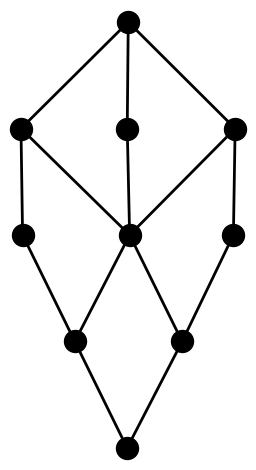}}
  \hspace{2em}
  \subcaptionbox*{special}
  {\includegraphics[height=0.23\textwidth]{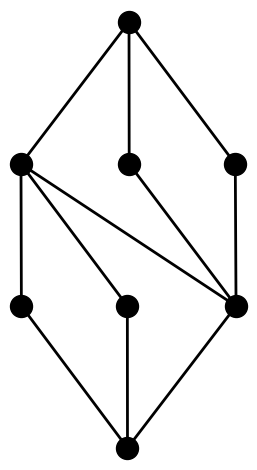}}
  \caption{Example pieces of each symmetry type (and a special).}
  \label{fig:pieces}
\end{figure}

\begin{figure}
  \includegraphics[height=0.32\textwidth]{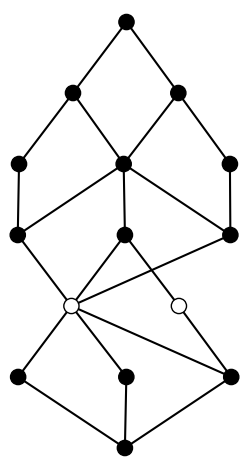}
  \hspace{4em}
  \includegraphics[height=0.32\textwidth]{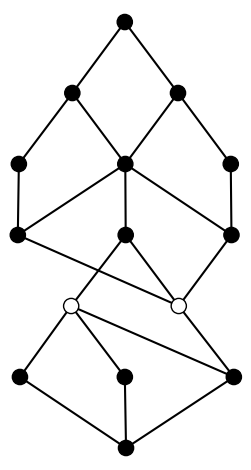}
  \caption{The two vertical 2-sums of a BF piece and an MC piece
    (elements common to the summands are shown as hollow circles).}
  \label{fig:gluing}
\end{figure}

\begin{definition}
  A composition is of symmetry type:
  \begin{enumerate}
  \item CF, if it has two coatoms and they are fixed;
  \item CS, if it has two coatoms and they are symmetric;
  \item CN (``composition-nonextensible''), if it has three or more coatoms.
  \end{enumerate}
\end{definition}

The symmetry types are illustrated in Figure~\ref{fig:pieces}.  If~we
take the BF~example as the lower summand, and the MC~example as the
upper summand, we obtain two nonisomorphic vertical 2-sums (both of
type CS) as shown in Figure~\ref{fig:gluing}.

Note that in~an MX piece atoms and coatoms can be swapped
independently, but in an MH piece only simultaneously; the shapes of
the letters X~and~H are meant as mnemonics for this.

We will form compositions ``bottom up'', adding new pieces over pieces
or smaller compositions.  The following lemma characterizes how many
and what kinds of compositions are formed in different cases.

\newcommand{\CF}{\mathrm{CF}}
\newcommand{\CS}{\mathrm{CS}}
\newcommand{\CN}{\mathrm{CN}}
\newcommand{\CFL}{\underline{\mathrm{CF}}}
\newcommand{\CSL}{\underline{\mathrm{CS}}}
\newcommand{\CNL}{\underline{\mathrm{CN}}}
\newcommand{\twice}{2\!\times\!}
\newcommand{\once}{1\!\times\!}
\newcommand{\XX}{\mathrm{XX}}
\newcommand{\LF}{\mathrm{LF}}
\newcommand{\LS}{\mathrm{LS}}
\newcommand{\MF}{\mathrm{MF}}
\newcommand{\MA}{\mathrm{MA}}
\newcommand{\MC}{\mathrm{MC}}
\newcommand{\MX}{\mathrm{MX}}
\newcommand{\MH}{\mathrm{MH}}
\newcommand{\BF}{\mathrm{BF}}
\newcommand{\BS}{\mathrm{BS}}
\newcommand{\TF}{\mathrm{TF}}
\newcommand{\TS}{\mathrm{TS}}

\begin{lemma}
  If $L$ is a piece or a composition, and $U$ is a piece, then the
  number and type of their nonisomorphic vertical 2-sums are as
  follows.

  \smallskip
  \setlength{\tabcolsep}{4pt}
  \nonslanted{  
    \renewcommand{\arraystretch}{1.5}
    \noindent\begin{tabular}{@{}lccccccc@{}}
    &&& \multicolumn{3}{c}{$U$ type} \\
    \cmidrule{2-8}
    $L$ type, any of & MF & MA & MC & MX & MH & TF & TS \\
    \midrule
    CF/BF/MF/MA     & $\twice\CF$ & $ \once\CF$ & $\twice\CS$ & $ \once\CS$ & $ \once\CF$ & $\twice\CN$ & $ \once\CN$ \\
    CS/BS/MC/MX/MH\hspace{1em} & $ \once\CF$ & $ \once\CF$ & $ \once\CS$ & $ \once\CS$ & $ \once\CS$ & $ \once\CN$ & $ \once\CN$ \\
    CN/TF/TS & none  & none & none & none & none & none & none \\
    \end{tabular}
  }
  \label{lemma:cases}
\end{lemma}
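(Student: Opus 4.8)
The plan is to reduce the statement to a question about the automorphisms of the two summands and then run a finite case analysis over the symmetry types of~$U$, using Lemma~\ref{lemma:number} for the \emph{count} and a gluing argument for the \emph{type}. First I would dispose of the $\CN/\TF/\TS$ row: each such $L$ has three or more coatoms, hence no two-coatom level to identify, so it cannot serve as a lower summand and no vertical 2-sum exists. For every remaining $L$ I would record that the sum $S$ is \emph{always a composition}: since $L$ and $U$ both have length at least~$3$, the common level $N$ obtained by identifying the coatoms of~$L$ with the atoms of~$U$ is a two-element level lying strictly between the atoms and the coatoms of~$S$, that is, a neck. Its coatoms are precisely the coatoms of~$U$, so if $U$ is a top piece then $S$ has three or more coatoms and is of type $\CN$, while if $U$ is a middle piece then $S$ has exactly two coatoms and is $\CF$ or $\CS$. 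This already settles the $\TF/\TS$ columns and isolates the two genuine questions: how many sums there are, and, when $U$ is a middle piece, whether the coatoms of~$S$ are fixed or symmetric.

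For the count I would simply translate the symmetry types into the hypotheses of Lemma~\ref{lemma:number}, which gives two nonisomorphic sums exactly when $L$ has fixed coatoms and $U$ has fixed atoms, and one otherwise. Reading off the definitions, $L$ has fixed coatoms precisely in the first row ($\CF/\BF/\MF/\MA$) and symmetric coatoms in the second ($\CS/\BS/\MC/\MX/\MH$); and $U$ has fixed atoms precisely in the columns $\MF,\MC,\TF$. This reproduces every $\twice$ and $\once$ entry of the table.

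The type of~$S$ is the technical core. I would first set up the gluing correspondence: because $S$ is graded and the neck~$N$ is a single level, any automorphism of~$S$ preserves rank, hence maps $N$ to itself and carries the part of rank at most $\mathrm{rk}(N)$ and the part of rank at least $\mathrm{rk}(N)$ each to itself. Restricting, and re-adjoining the removed top of~$L$ and bottom of~$U$, such an automorphism yields a pair $(\alpha,\beta)\in\mathrm{Aut}(L)\times\mathrm{Aut}(U)$ that induce the same permutation of~$N$; conversely any such compatible pair glues back to an automorphism of~$S$. Since the coatoms of~$S$ are the coatoms of~$U$, they are symmetric in~$S$ if and only if there is a compatible pair $(\alpha,\beta)$ in which $\beta$ swaps the coatoms of~$U$. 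As $N$ has two elements, compatibility reduces to matching a single bit: whether $\beta$ swaps the atoms of~$U$ must equal whether $\alpha$ swaps the coatoms of~$L$. Here $L$ can realize only the identity on~$N$ in the first row and both permutations in the second, so everything hinges on how a coatom-swapping automorphism of~$U$ acts on its atoms.

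The remaining step, and the main obstacle, is the bookkeeping for each middle-piece type of~$U$, of which $\MH$ is the delicate one. For $\MF$ and $\MA$ no automorphism swaps the coatoms of~$U$, so the coatoms of~$S$ are fixed and $S$ is $\CF$. For $\MC$ and $\MX$ there is a coatom-swapping automorphism that \emph{fixes} the atoms (immediate for $\MC$, since its atoms are fixed; by definition for $\MX$), hence compatible with the identity on~$L$, so $S$ is $\CS$ in either row. For $\MH$ I would show that \emph{every} coatom-swapping automorphism of~$U$ must also swap the atoms: if one fixed the atoms, then composing it with the guaranteed automorphism swapping both atoms and coatoms would produce an automorphism swapping the atoms while fixing the coatoms, making $U$ of type $\MX$ and contradicting $\MH$. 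Consequently a compatible $\alpha$ must swap the coatoms of~$L$, which is impossible in the first row (giving $\CF$) but available in the second (giving $\CS$). Finally I would note that the whole analysis is insensitive to which of the two identifications is used, since it only ever compares two permutations of the two-element neck; hence when two sums arise they share the same type, matching the doubled entries of the table.
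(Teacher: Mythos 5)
Your proposal is correct and takes essentially the same route as the paper: the counts come from Lemma~\ref{lemma:number} applied row by row, and the types from restricting and extending automorphisms across the two-element neck column by column, with your explicit gluing correspondence between $\mathrm{Aut}(S)$ and compatible pairs $(\alpha,\beta)$ being a formalized version of the paper's restriction/extension steps. Your only substantive addition is spelling out the $\MH$ case (every coatom-swapping automorphism of an $\MH$ piece must also swap its atoms, since otherwise composing with the automorphism swapping both would exhibit the piece as $\MX$), a fact the paper's proof asserts without detail.
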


\begin{proof}
  Let us first prove the numbers.  We proceed by the rows.

  Row 1: $L$ has two fixed coatoms.  By~Lemma~\ref{lemma:number}, if
  $U$ has fixed atoms (MF, MC or TF), then there are two nonisomorphic
  vertical 2-sums; otherwise there is~one.
      
  Row 2: $L$ has two symmetric coatoms.  By~Lemma~\ref{lemma:number}
  there is one vertical 2-sum up to isomorphism.
      
  Row 3: $L$ has three or more coatoms, so no vertical 2-sums are formed.

  \smallskip
  Next we deduce the symmetry type of each vertical 2-sum~$S_i$
  ($i=1,2$).  We proceed by the columns.

  If $U$ is MF or MA, it has fixed coatoms.  Then $S_i$ cannot have an
  automorphism that swaps the coatoms of $S_i$, because its
  restriction to $U$ would be an automorphism that swaps the coatoms
  of~$U$.  Thus $S_i$ has fixed coatoms.
    
  If $U$ is MC or MX, it has an automorphism that swaps its coatoms
  and fixes its atoms; extending with the identity mapping in~$L$
  gives, in each $S_i$, an automorphism that swaps the coatoms.  Thus
  $S_i$ has symmetric coatoms.
  
  If $U$ is MH, then an automorphism of $S_i$ that swaps the coatoms
  also swaps the atoms of $U$, which are also the coatoms of $L$.
  Thus $S_i$ has symmetric coatoms if and only if $L$ has symmetric
  coatoms.

  If $U$ is TF or TS, then $S_i$ has three or more coatoms and is of
  type~CN.
\end{proof}

\section{Cartesian counting in a lattice family}

In this section we present our main result, a recurrence relation that
counts all nonisomorphic compositions in some desired lattice family,
provided that the family has suitable form.  We also give examples of
such families.

\begin{definition}
  A family $\F$ of graded vi-lattices is (vertically)
  \emph{2-summable} if the following conditions hold:
  \begin{enumerate}[(C1)]
  \item If $L,U \in \F$ and $S$ is one of their vertical 2-sums, then
    $S \in \F$.
  \item If $S \in \F$ is a vertical 2-sum of $L$ and~$U$, then
    $L,U \in F$.
  \end{enumerate}
  \label{def:summable}
\end{definition}

The first condition ensures that vertical 2-sums stay in the family,
and the second ensures that all compositions in $\F$ are indeed
obtained as vertical 2-sums of smaller lattices in~$\F$.

\begin{theorem}
  Let $\F$ be a 2-summable family, and let $\XX_n$ denote the number
  of nonisomorphic $n$-element lattices in~$\F$ having symmetry
  type~XX.  Then for $n < 8$ we have $\CF_n = \CS_n = \CN_n = 0$, and
  for $n \ge 8$ the following recurrences hold:
  \begin{align*}
    \CF_n &= \sum_{j=6}^{n-2} \Big( \LF_j \cdot (2\cdot\MF_k + \MA_k + \MH_k)
            \;+\; \LS_j \cdot (\MF_k + \MA_k) \Big) \\
    \CS_n &= \sum_{j=6}^{n-2} \Big( \LF_j \cdot (2\cdot\MC_k + \MX_k)
            \;+\; \LS_j \cdot (\MC_k + \MX_k + \MH_k) \Big) \\
    \CN_n &= \sum_{j=6}^{n-2} \Big( \LF_j \cdot (2\cdot\TF_k + \TS_k)
            \;+\; \LS_j \cdot (\TF_k + \TS_k) \\
    \intertext{where $k=n-j+4$, and}
    \LF_j &= \CF_j + \BF_j + \MF_j + \MA_j \\
    \LS_j &= \CS_j + \BS_j + \MC_j + \MX_j + \MH_j.
  \end{align*}
  \label{thm:main}
\end{theorem}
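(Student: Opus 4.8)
The plan is to establish a bijection, up to isomorphism, between the $n$-element compositions in $\F$ and certain pairs of summands together with a choice of gluing, and then to tally those pairs using the multiplicities supplied by Lemmas~\ref{lemma:number} and~\ref{lemma:cases}. The base case is immediate: a composition contains a neck and hence has at least eight elements, so $\CF_n=\CS_n=\CN_n=0$ for $n<8$. For $n\ge 8$, take any composition $S\in\F$. Since $S$ has a neck, it has a well-defined \emph{highest} neck, the two-element level of greatest rank that is neither the atoms nor the coatoms. Cutting $S$ at this level yields a canonical decomposition into a lower part $L$ (everything up to and including the highest neck, which becomes the two coatoms of $L$) and an upper part $U$ (the highest neck, now the two atoms of $U$, together with everything above it).

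First I would verify the shape of this decomposition. By maximality of the chosen neck, $U$ has no neck and so is a piece; it has exactly two atoms, hence is a middle piece or a top piece, never a bottom piece. The part $L$ has exactly two coatoms (the highest neck), so it is a piece with two coatoms (middle or bottom) or a composition with two coatoms, which excludes type CN. Grouping $L$ by the symmetry of its coatoms, its type therefore falls among those counted by $\LF_j$ (fixed coatoms: CF, BF, MF, MA) or $\LS_j$ (symmetric coatoms: CS, BS, MC, MX, MH), where $j=|L|$. A piece has at least six elements, so $j\ge 6$ and $|U|\ge 6$; writing $k=|U|$, the relation $j+k-4=n$ gives $k=n-j+4$ and forces $6\le j\le n-2$, exactly the summation range.

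Next I would prove that this decomposition is a bijection up to isomorphism, which is the crux. For well-definedness, any isomorphism $S\to S'$ preserves rank and level sizes, hence maps the highest neck of $S$ to that of $S'$ and induces isomorphisms $L\to L'$ and $U\to U'$; thus isomorphic compositions give isomorphic pairs. For the reverse, I would check that reassembly inverts the cut: given summands of the listed types, any vertical 2-sum $S$ of $L$ and $U$ is again a composition whose highest neck is precisely the glued level. Indeed $U$, being a piece, contributes no neck above the glue; when $U$ is a middle piece its two coatoms become the coatoms of $S$ rather than a neck, and when $U$ is a top piece they give $S$ three or more coatoms. Condition~(C1) places $S$ in $\F$ and condition~(C2) keeps the summands of any composition in $\F$, so the correspondence stays inside the family. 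Hence the highest-neck cut recovers exactly the pair $(L,U)$ we glued, distinct isomorphism classes of pairs yield nonisomorphic compositions, and within a single pair the number of nonisomorphic 2-sums is exactly as in Lemma~\ref{lemma:number}.

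Finally I would tally. For a fixed isomorphism class of pair $(L,U)$ with $|L|=j$ and $k=n-j+4$, Lemma~\ref{lemma:number} gives two nonisomorphic 2-sums when $L$ has fixed coatoms and $U$ has fixed atoms, and one otherwise, while Lemma~\ref{lemma:cases} records the resulting type (CF, CS, or CN). Reading that table column by column and separating the $\LF$ and $\LS$ contributions produces, for type CF, the coefficients $\twice\MF_k+\MA_k+\MH_k$ against $\LF_j$ and $\MF_k+\MA_k$ against $\LS_j$; the CS and CN recurrences come identically from the MC/MX/MH and TF/TS columns. Summing these products over $6\le j\le n-2$ yields the three displayed recurrences. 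I expect the bijection step to be the main obstacle: one must argue carefully that the highest neck is a genuine isomorphism invariant and that gluing never introduces a neck strictly above the glued level, so that cutting and reassembly are mutually inverse and no composition is double-counted or missed.
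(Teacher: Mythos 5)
Your proposal is correct and takes essentially the same route as the paper's proof: the unique decomposition of a composition at its highest neck into a lower summand $L$ and an upper piece $U$, membership of the summands in $\F$ via conditions (C1)/(C2), and the tally from the table of Lemma~\ref{lemma:cases} grouped into the $\LF_j$ and $\LS_j$ contributions. You merely spell out in more detail two steps the paper asserts without elaboration---that the highest neck is an isomorphism invariant and that gluing a piece on top creates no neck above the glued level---which is a faithful filling-in rather than a different argument.
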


\begin{proof}
  For $n<8$ the numbers are zero, because a composition cannot have
  fewer than $8$~elements.
  
  Let then $n \ge 8$ and consider an $n$-element CF-type composition
  $S \in \F$.  There is exactly one way of expressing $S$ as a
  vertical 2-sum of two lattices $L,U$ such that $U$~is a piece.  This
  $U$ contains the elements of $S$ above and including its
  highest-ranked neck, plus an augmented bottom element.  By
  condition~(C2) we have $L,U \in \F$.  Furthermore, because
  $|U| \ge 6$ and and $|L|+|U|-4=n$, it follows that $|L| \le n-2$.

  We also observe that different nonisomorphic choices of $L$ and $U$,
  where $U$ is a piece, lead to nonisomorphic results.  To be more
  precise: If $L \ncong L'$ or $U \ncong U'$, and $U$ and $U'$ are
  pieces, then the vertical 2-sums of $L$ and $U$ are not isomorphic
  to the vertical 2-sums of $L'$ and $U'$.
  
  All nonisomorphic $n$-element CF-type compositions in $\F$ can be
  counted by considering (for all $j=6,\ldots,n-2$) first the choices
  of a $j$-element lower summand $L \in \F$, and then the choices of
  an upper summand $U \in \F$ such that $U$~is a~piece with $k=n-j+4$
  elements, subject to the requirement that the resulting vertical
  2-sums are of type~CF.

  Now $\LF_j$ is the number of nonisomorphic lower summands that have
  fixed coatoms.  For each such lower summand, by collecting the
  CF-type results from the first row of the table in
  Lemma~\ref{lemma:cases}, we get $2\cdot\MF_k+\MA_k+\MH_k$
  nonisomorphic vertical 2-sums, which are in~$\F$ by condition~(C1).

  Similarly, $\LS_j$ is the number of nonisomorphic lower summands
  that have symmetric coatoms.  For each such lower summand, by
  collecting the CF-type results from the second row of the table in
  Lemma~\ref{lemma:cases}, we get $\MF_k+\MA_k$ nonisomorphic vertical
  2-sums, which are in~$\F$ by condition~(C1).

  Adding up the cases we obtain the stated expression for $\CF_n$.
  The expressions for $\CS_n$ and $\CN_n$ follow in the same manner.
\end{proof}

Not all families of graded vi-lattices are 2-summable.  For a~simple
example, finite graded rank-four vi-lattices fail both conditions (C1)
and~(C2).  Interestingly, finite geometric lattices are 2-summable but
in a vacuous way.

\begin{theorem}
  The only finite geometric lattice that has a two-element level is
  $M_2$.
  \label{thm:geometric-vacuous}
\end{theorem}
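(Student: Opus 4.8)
The plan is to exploit the two defining features of a finite geometric lattice $G$: it is atomistic (every element is a join of atoms) and semimodular, so its rank function $r$ satisfies $r(x\vee y)+r(x\wedge y)\le r(x)+r(y)$, and every interval $[u,v]$ is again geometric. Writing $L_k$ for the set of elements of rank $k$ and $\rho=r(\hat 1)$ for the rank of $G$, the levels $L_0=\{\hat 0\}$ and $L_\rho=\{\hat 1\}$ are singletons, so a two-element level can only occur among $L_1,\dots,L_{\rho-1}$. I would prove that as soon as $\rho\ge 3$ every one of these proper levels has at least three elements; then a two-element level forces $\rho\le 2$, and the rank-$\le 2$ case is immediate, since a rank-$2$ geometric lattice is some $M_n$ with $n\ge 2$ atoms (which are exactly its coatoms), and $|L_1|=2$ happens precisely for $M_2$.

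The heart of the argument is two small lemmas, on atoms and on coatoms, which I would establish first. For atoms: atomisticity shows $\hat 1$ is the join of all atoms, so with only two atoms $a_1,a_2$ the whole lattice is $\{\hat 0,a_1,a_2,a_1\vee a_2\}$, forcing $\rho=r(a_1\vee a_2)\le 2$ and $G=M_2$. For coatoms, suppose $G$ has exactly two coatoms $c_1,c_2$. Since each $c_i$ is the join of the atoms below it and $c_1\neq c_2$, there is an atom $a_1\le c_1$ with $a_1\not\le c_2$ and, symmetrically, an atom $a_2\le c_2$ with $a_2\not\le c_1$. In a finite lattice every element below $\hat 1$ lies under some coatom; so if $a_1\vee a_2<\hat 1$ it would lie under $c_1$ or $c_2$, but $a_1\vee a_2\le c_1$ gives $a_2\le c_1$ and $a_1\vee a_2\le c_2$ gives $a_1\le c_2$, both excluded. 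Hence $a_1\vee a_2=\hat 1$ and $\rho\le r(a_1)+r(a_2)=2$, whence again $G=M_2$. Each lemma then upgrades to the statement that if $\rho\ge 3$ then $G$ has at least three atoms and at least three coatoms, the cases of zero or one atom/coatom being ruled out directly by atomisticity.

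Finally I would propagate these endpoint bounds to all intermediate levels by localizing in principal ideals, which I expect to be the only genuinely delicate step. For any $x\in L_k$ the interval $[\hat 0,x]$ is geometric of rank $k$, and its coatoms are exactly the elements of $G$ of rank $k-1$ lying below $x$. Applying the coatom lemma to $[\hat 0,x]$ whenever $k\ge 3$ therefore shows that this interval alone contributes at least three rank-$(k-1)$ elements, so $|L_{k-1}|\ge 3$. Letting $k$ run from $3$ to $\rho$ (taking $x=\hat 1$ at the top) yields $|L_j|\ge 3$ for every $j=2,\dots,\rho-1$, while $|L_1|\ge 3$ comes from the atom lemma; together with the singleton levels $L_0,L_\rho$ this shows that no level of a rank-$\ge 3$ geometric lattice has exactly two elements. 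The main obstacle is precisely this passage from the atom and coatom counts to arbitrary middle levels, and the clean resolution is the observation that a middle level of $G$ is the coatom set of the smaller geometric lattice $[\hat 0,x]$, so the entire problem reduces to the coatom lemma applied one interval at a time, with no need for any global Whitney-number estimates.
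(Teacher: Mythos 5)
Your proposal is correct, but it takes a genuinely different route from the paper's proof. The paper argues that the two elements $a,b$ of a two-element level can have no join-irreducible covers (in an atomistic lattice the join-irreducibles are atoms), so they share a unique cover $c$; since a geometric lattice is vertically indecomposable, $c=1_L$, so any two-element level must be the coatom level; it then invokes the Dowling--Wilson inequality $W_1\le W_{r-1}$ between Whitney numbers as an external result to force the number of atoms down to two, giving $M_2$. You instead build a self-contained elementary argument: an atom lemma (two atoms give at most four elements, hence rank $\le 2$, by atomisticity) and a coatom lemma (two coatoms $c_1,c_2$ admit atoms $a_1\le c_1$, $a_1\not\le c_2$ and $a_2\le c_2$, $a_2\not\le c_1$, forcing $a_1\vee a_2=\hat{1}$ and rank $\le 2$ by submodularity), and then you propagate to middle levels by observing that for $x$ of rank $k$ the principal ideal $[\hat{0},x]$ is again geometric with coatoms exactly the rank-$(k-1)$ elements below $x$. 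All steps check out: the separating atoms exist because each $c_i$ is the join of the atoms below it and distinct coatoms are incomparable, and $[\hat{0},x]$ visibly inherits both atomisticity and semimodularity (you cite the general interval fact, but only the easy principal-ideal case is needed). Your localization argument avoids the Whitney-number machinery entirely and proves a strictly stronger statement -- every proper level of a geometric lattice of rank at least $3$ has at least three elements -- whereas the paper's proof is shorter at the price of quoting Dowling and Wilson; your version also explains \emph{why} the two-element level phenomenon collapses to rank $2$, one interval at a time.
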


\begin{proof}
  Let $L$~be a finite geometric lattice that has a two-element
  level~$\{a,b\}$.  Because $L$ is atomistic, neither $a$ or $b$ has
  any join-irreducible covers, thus $a$ and $b$ are covered by exactly
  one element~$c$.  Further, because $L$~is necessarily vertically
  indecomposable, it follows that $c=1_L$, and $a,b$ are the coatoms.

  The numbers of atoms and coatoms in a finite geometric lattice of
  rank~$r$ are known as the Whitney numbers $W_1$ and $W_{r-1}$.
  It~is known that $W_1 \le W_{r-1}$; see e.g.~Dowling and
  Wilson~\cite{dowling1975}.  This implies that our~$L$ has two atoms.
  But we have shown that $L$~cannot have two-element levels other than
  the coatoms; thus the atoms are the coatoms, and $L = M_2$.
\end{proof}

In other words, Theorem~\ref{thm:geometric-vacuous} says that all
finite geometric lattices are special; there are no pieces and no
compositions, so no use for the vertical 2-sum.  Modular and
distributive lattices will be more interesting for our purposes.  We
first prove an auxiliary result by elementary means.

\begin{lemma}
  In the Hasse diagram of a finite semimodular lattice, the subgraph
  induced by two consecutive levels is connected.
  \label{lemma:connected}
\end{lemma}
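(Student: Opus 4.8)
The plan is to exploit the covering (``exchange'') property of semimodular lattices: if two distinct elements of equal rank share a common lower cover, then their join covers both of them. Since a finite semimodular lattice is graded, the levels $L_k$ are well defined, and ``the subgraph induced by two consecutive levels'' means the bipartite graph $G_k$ with vertex set $L_k \cup L_{k+1}$ whose edges are the covering relations $x \lessdot y$ with $x \in L_k$ and $y \in L_{k+1}$. I would first reduce the connectivity of $G_k$ to the statement that all elements of $L_k$ lie in a single component of $G_k$: every element of $L_{k+1}$ has a lower cover in $L_k$, so it is automatically attached to whatever $L_k$-component contains that cover; hence it suffices to connect the lower level to itself.

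Next I would record the needed consequence of semimodularity. If $x,y \in L_k$ are distinct, then $x \wedge y < x$, so $r(x \wedge y) \le k-1$; if in addition $x$ and $y$ share a lower cover $c \in L_{k-1}$, then $c \le x \wedge y$ together with the rank bound forces $x \wedge y = c$, so $x \wedge y \lessdot x$ and $x \wedge y \lessdot y$. Upper semimodularity then gives $x \lessdot x \vee y$ and $y \lessdot x \vee y$, so $x \vee y \in L_{k+1}$ is a common upper cover. In short, two distinct elements of $L_k$ sharing a lower cover also share an upper cover, and are therefore joined by a length-two path in $G_k$.

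With this in hand I would induct on $k$ from the bottom. The base case $k=0$ is immediate, since $L_0 = \{0_L\}$ and every atom covers $0_L$, making $G_0$ a star. For the inductive step, assume $G_{k-1}$ is connected and take $a,a' \in L_k$. By the hypothesis there is a path $a = a_0, c_0, a_1, c_1, \ldots, a_t = a'$ in $G_{k-1}$ alternating between $L_k$ and $L_{k-1}$, in which each $c_i \in L_{k-1}$ is a common lower cover of the distinct elements $a_i, a_{i+1} \in L_k$. Applying the previous paragraph to each consecutive pair replaces the segment $a_i, c_i, a_{i+1}$ by the length-two path $a_i, (a_i \vee a_{i+1}), a_{i+1}$ inside $G_k$; concatenating these connects $a$ to $a'$ in $G_k$. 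Thus all of $L_k$ lies in one component, and by the reduction $G_k$ is connected.

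The main obstacle is orientation rather than difficulty: semimodularity must be used in the correct direction. Upper semimodularity yields ``common lower cover $\Rightarrow$ common upper cover'' but not the converse, so the induction must build upward, lifting paths from $G_{k-1}$ into $G_k$; a top-down induction would require lower semimodularity and fail in general. The only other delicate point is the small verification that a shared lower cover of two equal-rank elements must coincide with their meet, which is precisely what licenses the covering property; once that is noted, the rest is bookkeeping.
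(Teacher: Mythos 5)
Your proof is correct and takes essentially the same approach as the paper: an upward induction on the level index, using semimodularity (Birkhoff's condition) to turn a common lower cover of two elements of $L_k$ into a common upper cover in $L_{k+1}$, and attaching each element of $L_{k+1}$ to the lower level via an arbitrary lower cover. Your explicit lifting of an alternating path from $G_{k-1}$ into $G_k$ is just a more detailed rendering of the paper's partition argument, and your check that a shared lower cover of two equal-rank elements equals their meet spells out a step the paper leaves implicit.
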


\begin{proof}
  Let $L$ be a finite semimodular lattice, $L_k$ its $k$th level, and
  $H_k$ the subgraph induced by $L_k$ and $L_{k+1}$.  We use induction
  on~$k$.  Clearly $H_0$ is connected.  Assume then that $H_{k-1}$ is
  connected.  If $L_k$ is a singleton, then obviously $H_k$ is
  connected.  Otherwise, let $(U,V)$ be any partition of $L_k$ into
  two nonempty subsets.  Because $H_{k-1}$ is connected, there is an
  element in $L_{k-1}$ that is covered by some two elements $u \in U$
  and $v \in V$.  Then by semimodularity $u$ and $v$ are covered by
  some $w \in L_{k+1}$, so there is a path from $U$ to~$V$ in~$H_k$.
  Finally, from every element in $L_{k+1}$ there is an edge to $L_k$.
  Thus $H_k$ is connected.
\end{proof}

Note that Lemma~\ref{lemma:connected} also follows from previously
known, more general results:  Bj\"{o}rner~\cite{bjorner1980} proved
that finite semimodular lattices are lexicographically shellable, and
Collins~\cite{collins1992} proved that graded lexicographically
shellable lattices are rank-connected (i.e.~the subgraph induced by
two consecutive levels is connected).

\begin{lemma}
  The family of finite semimodular vi-lattices is 2-summable.
  \label{lemma:semimodular-summable}
\end{lemma}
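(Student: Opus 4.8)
The plan is to verify the two defining conditions (C1) and (C2) of 2-summability for the family $\F$ of finite semimodular vi-lattices. Throughout I would use two structural facts: first, that vertical 2-sums of vi-lattices are again vi-lattices, and of graded lattices are graded (noted after the first definition); second, Lemma~\ref{lemma:connected}, which forbids a two-element level from being disconnected from adjacent levels in the Hasse diagram. The main work is to show that the vertical 2-sum operation preserves semimodularity in both directions.

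For condition (C1), I would let $L, U \in \F$, so both are finite semimodular vi-lattices, and let $S$ be a vertical 2-sum of $L$ and~$U$. Since $S$ is already known to be a graded vi-lattice, it remains to check semimodularity, i.e.\ that whenever two distinct elements $x,y$ both cover a common element, they are both covered by a common element. The only covering relations in $S$ that are not already covering relations within $L$ or within $U$ occur at the identified level, where the coatoms of~$L$ become the atoms of~$U$. The plan is to argue by cases on where the elements $x$ and $y$ sit relative to this glued neck: if both lie in~$L$ (below or at the neck) the join is computed in~$L$ and semimodularity of~$L$ applies, similarly if both lie in~$U$; the interesting case is when the common lower cover is one of the glued coatoms, which requires checking that the covering behavior upward into~$U$ matches semimodularity. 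Because the two identified elements form the atoms of~$U$, and $U$ is semimodular, any two elements covering a glued element have a common upper cover in~$U$.

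For condition (C2), I would let $S \in \F$ be a vertical 2-sum of $L$ and~$U$ and show that $L$ and $U$ are themselves finite semimodular vi-lattices. They are vi-lattices and graded by the structural facts already cited. The key point is that $L$ is an upper interval-like truncation and $U$ a lower truncation of~$S$ with a single augmented extreme element, so any covering relation witnessing a failure of semimodularity in~$L$ or~$U$ would lift to a corresponding failure in~$S$ — contradicting that $S \in \F$. Here Lemma~\ref{lemma:connected} is what guarantees that the relevant neck genuinely separates $S$ into the two summands cleanly, so that the semimodular covering conditions in each summand are faithfully inherited from~$S$.

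I expect the main obstacle to be the careful bookkeeping at the glued level in the (C1) direction: one must verify that the newly created covering relations (coatoms of~$L$ now doubling as atoms of~$U$, with the augmented bottom of~$U$ and augmented top of~$L$ removed) do not introduce a pair of elements covering a common element without a common upper cover. The cleanest way to handle this will be to observe that every covering relation in~$S$ is a covering relation either entirely within the copy of~$L$ or entirely within the copy of~$U$, so the covering-based semimodularity condition localizes to one summand or the other; semimodularity of~$S$ then reduces to semimodularity of both summands, establishing (C1) and (C2) simultaneously. The remaining subtlety is confirming that no covering relation straddles the neck in a way that mixes the two summands, which is exactly where the two-element level and Lemma~\ref{lemma:connected} are used.
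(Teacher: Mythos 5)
Your overall skeleton matches the paper's: verify (C1) and (C2) separately, work with the covering (Birkhoff) form of semimodularity, and invoke Lemma~\ref{lemma:connected}. But the shortcut you settle on in your final paragraph --- ``every covering relation in $S$ lies entirely within one summand, so the semimodularity condition localizes and both directions follow at once'' --- fails in exactly the one place where Lemma~\ref{lemma:connected} has real content. In the (C2) direction, $U$ is not merely a sub-diagram of $S$: it has an \emph{augmented} bottom $0_U$, and the covering pairs $0_U \prec a_1$ and $0_U \prec a_2$ (where $a_1,a_2$ are the atoms of $U$, i.e.\ the neck of $S$) exist in $U$ but not in $S$. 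Birkhoff's condition for $U$ at this configuration demands that $a_1$ and $a_2$ have a common upper cover, and nothing in the semimodularity of $S$ ``localizes'' to give this: in $S$ the two neck elements need not cover their meet, so the covering hypothesis is never triggered for them in $S$, and no failure in $U$ ``lifts'' to a failure in $S$ as you claim. This is precisely where the paper applies Lemma~\ref{lemma:connected}: since $S$ is semimodular, the subgraph induced by the neck level and the level above it is connected, and because the neck has only two elements, connectedness forces a common upper cover $c$ of both, giving $a_1, a_2 \prec_U c$. Your stated use of the lemma --- to guarantee that the neck ``separates $S$ cleanly'' and that no covering relation straddles it --- is not what it does; that separation is automatic from gradedness and needs no lemma at all.

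There is also a smaller misassignment in your (C1) case analysis: when the two elements covering a common element are the neck elements themselves (both ``at the neck''), their join in $L$ is $1_L$, which has been deleted from $S$, so semimodularity of $L$ says nothing usable there. That case must be handled in $U$: the neck elements are the atoms of $U$ and cover $0_U$, so semimodularity of $U$ yields a common upper cover, which survives in $S$. The paper's split is accordingly ``below $N$'' (use $L$) versus ``in or above $N$'' (use $U$). Dually, note that in (C2) the case of the coatoms of $L$ is trivial, since $1_L$ covers both --- so $L$ needs no connectivity argument; only $U$ does. With these repairs your proof would coincide with the paper's.
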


\begin{proof}
  We use subscripted symbols $\wedge_L$, $\vee_L$ and $\prec_L$ to
  denote meet, join and covered-by in lattice~$L$.
  
  \begin{enumerate}[(C1)]
  \item Let $L$ and $U$ be finite semimodular vi-lattices, $S$ their
    vertical 2-sum, and $N$ the two identified elements of $L$
    and~$U$.  Clearly $S$ is a vi-lattice.  We show that $S$~is
    semimodular by using Birkhoff's
    condition~\cite[p.~331]{gratzer2011}.  Let $a,b \in S$ such that
    they cover $a \wedge_S b$.  Then $a$ and $b$ have the same rank,
    and are either below~$N$, in~$N$ or above~$N$.  If $a,b$ are
    below~$N$, then because $L$~is semimodular,
    $a,b \prec_S a \vee_L b = a \vee_S b$.  If $a,b$ are in or
    above~$N$, then because $U$~is semimodular,
    $a,b \prec_S a \vee_U b = a \vee_S b$.

  \item Let $S$ be a finite semimodular vi-lattice that is a vertical
    2-sum of $L$ and $U$.  Clearly $L$ and~$U$ are vi-lattices.  We
    show that they are semimodular, again using Birkhoff's condition.

    First let $a,b \in L$ such that they cover $a \wedge_L b$.  If
    $a,b$ are not the coatoms of~$L$, then $a \vee_L b = a \vee_S b$.
    If $a,b$ are the coatoms of~$L$, then $a \vee_L b = 1_L$.  In both
    cases $a,b \prec_L a \vee_L b$.  Thus $L$~is semimodular.

    Let then $a,b \in U$ such that they cover $a \wedge_U b$.  If
    $a,b$ are not the atoms of~$U$, then
    $a \wedge_U b = a \wedge_S b$, and because $S$ is semimodular,
    $a,b \prec_U a \vee_U b = a \vee_S b$.  If $a,b$ are the atoms
    of~$U$, then they are a neck of~$S$.  Because $S$~is semimodular,
    it follows from Lemma~\ref{lemma:connected} that $a,b$ have a
    common upper cover~$c$ in~$S$.  Then also $a,b \prec_U c$.  Thus
    $U$~is semimodular. \qedhere
  \end{enumerate}
\end{proof}
   
\begin{lemma}
  The family of finite modular vi-lattices is 2-summable.
  \label{lemma:modular-summable}
\end{lemma}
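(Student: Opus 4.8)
The plan is to derive modularity from semimodularity by exploiting order duality, so that Lemma~\ref{lemma:semimodular-summable} does almost all of the work. The key external fact I would use is the textbook characterization that a finite lattice is modular if and only if it is both (upper) semimodular and lower semimodular, equivalently, if and only if both $L$ and its order dual $L^*$ are semimodular (see e.g.~\cite{gratzer2011}). Because the class of modular lattices is self-dual, and vertical indecomposability is preserved under duality, this lets me invoke the semimodular result twice: once on a lattice, and once on its dual.

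First I would record how the vertical 2-sum interacts with duality. If $S$ is a vertical 2-sum of $L$ and $U$, obtained by removing $1_L$ and $0_U$ and identifying the two coatoms of $L$ with the two atoms of $U$, then reversing the order of $S$ turns the lower part $L$ into an upper part and the upper part $U$ into a lower part; coatoms become atoms and vice versa, and the identified neck is preserved. Hence $S^*$ is a vertical 2-sum of $U^*$ and $L^*$, with $U^*$ as the lower summand. Order duality preserves length and the vi-property, and carries two coatoms to two atoms, so $U^*$ and $L^*$ again satisfy the hypotheses of the vertical 2-sum.

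With this in hand, (C1) and (C2) follow by a double application of Lemma~\ref{lemma:semimodular-summable}. For (C1), let $L,U \in \F$ be modular vi-lattices; then $L,U$ and their duals $L^*,U^*$ are all semimodular vi-lattices. Applying Lemma~\ref{lemma:semimodular-summable}(C1) to $L,U$ shows that $S$ is a semimodular vi-lattice, and applying it to $U^*,L^*$ shows that $S^*$, being one of their vertical 2-sums, is semimodular; hence $S$ is lower semimodular as well, and therefore modular. For (C2), let $S \in \F$ be modular and a vertical 2-sum of $L$ and $U$. Since $S$ is semimodular, Lemma~\ref{lemma:semimodular-summable}(C2) makes $L$ and $U$ semimodular vi-lattices; and since $S^*$ is modular, hence semimodular, and is the vertical 2-sum of $U^*$ and $L^*$, the same lemma applied to $S^*$ makes $U^*$ and $L^*$ semimodular. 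Dualizing back, $L$ and $U$ are lower semimodular, and therefore modular.

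The only real content, and the step I would check most carefully, is the duality bookkeeping in the second paragraph: that order-reversal sends the vertical 2-sum of $L$ and $U$ to the vertical 2-sum of $U^*$ and $L^*$, matching coatoms to atoms correctly at the neck, and that both resulting summands still have length $\ge 3$. Everything else is mechanical, reducing to the stated characterization of finite modular lattices as those that are semimodular together with their duals, together with Lemma~\ref{lemma:semimodular-summable}, which already supplies the vi-property in each direction.
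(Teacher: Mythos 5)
Your proposal is correct and is exactly the paper's argument: the paper proves this lemma in one line, ``Follows from Lemma~\ref{lemma:semimodular-summable} by duality,'' and your write-up simply makes explicit the duality bookkeeping (that $S^*$ is a vertical 2-sum of $U^*$ and $L^*$) and the characterization of modularity as semimodularity of both a lattice and its dual. No gaps; you have just spelled out what the paper leaves implicit.
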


\begin{proof}
  Follows from Lemma~\ref{lemma:semimodular-summable} by duality.
\end{proof}

\begin{lemma}
  The family of finite distributive vi-lattices is 2-summable.
  \label{lemma:distributive-summable}
\end{lemma}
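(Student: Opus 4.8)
The plan is to reduce distributivity to modularity together with the absence of a single forbidden sublattice, and then to reuse what we already have. Since Lemma~\ref{lemma:modular-summable} shows that the family of finite modular vi-lattices is 2-summable, any vertical 2-sum of two distributive (hence modular) lattices is already known to be modular, and any summand of a modular lattice is modular. By the classical forbidden-sublattice characterization (see e.g.~\cite{gratzer2011}), a modular lattice is distributive if and only if it contains no sublattice isomorphic to $M_3$. Thus for both conditions it suffices to control $M_3$ sublattices, and I would organize the whole argument around the slogan that an $M_3$ can never straddle the neck.

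For condition~(C1), let $S$ be a vertical 2-sum of distributive $L$ and $U$ with neck $N=\{n_1,n_2\}$, and suppose $S$ contained a sublattice $\{m,a,b,c,j\}\cong M_3$ with bottom $m$, top $j$, and pairwise-incomparable middle elements $a,b,c$. The first step is to observe that no middle element can lie strictly above the neck while another lies strictly below it: an element strictly above $N$ dominates both $n_1$ and $n_2$, hence dominates every element strictly below $N$, forcing comparability. So all three of $a,b,c$ lie at or below the neck (the other case being dual). The second step is to show the extreme elements stay on that side: $m$ lies below $a,b,c$ and is harmless, while if $j$ were strictly above the neck then no two of $a,b,c$ could share a common upper bound at or below $N$. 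But in the finite lattice $L$ every element other than $1_L$ lies below one of the two coatoms $n_1,n_2$; so at most one of $a,b,c$ can be $\le n_1$ and at most one $\le n_2$, contradicting that there are three of them. Hence the whole $M_3$ lies in the $L$-side of $S$, where meets and joins agree with those of $L$, so $L$ would contain an $M_3$, contradicting its distributivity.

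For condition~(C2), let $S$ be a distributive vertical 2-sum of $L$ and $U$; by Lemma~\ref{lemma:modular-summable} both summands are modular, so again only $M_3$-freeness remains. Suppose $L$ contained an $M_3$ with top $j$. If $j\ne 1_L$ then all five elements lie in the $L$-side of $S$, and a join of two $L$-side elements equals its value in $L$ whenever that value is not $1_L$, so the $M_3$ survives in $S$, a contradiction. If $j=1_L$ then the three middle elements have pairwise joins equal to $1_L$, so no two share a coatom; since $L$ has only the two coatoms $n_1,n_2$, pigeonhole again gives a contradiction. Hence $L$ is distributive, and the argument for $U$ is exactly dual, using that $U$ has precisely two atoms and that every element other than $0_U$ lies above one of them.

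The routine part will be verifying that meets and joins transport faithfully across the gluing, which is already implicit in the semimodular proof. The main obstacle is the straddling claim for $M_3$, and the crux in each direction is the same numerical coincidence: the neck has exactly two elements, $L$ has exactly two coatoms and $U$ exactly two atoms, so three pairwise-incomparable elements cannot all have distinct supports at the neck. Once this pigeonhole observation is in place, both conditions should follow with no further case analysis.
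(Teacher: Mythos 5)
Your overall strategy---reduce to modularity via Lemma~\ref{lemma:modular-summable} plus $M_3$-freeness, then pigeonhole three pairwise-incomparable elements against a two-element neck (or coatom set)---is sound, and your condition~(C2) argument is correct as written. But your condition~(C1) argument has a genuine gap: the justification of the ``no straddling'' step is false. An element of $S$ strictly above the neck $N=\{n_1,n_2\}$ does \emph{not} in general dominate both $n_1$ and $n_2$; it lies above at least one atom of $U$, but may be incomparable to the other. (Take $U$ containing an element $u$ that covers the atom $a_1$ and is incomparable to $a_2$: in the vertical 2-sum, $u$ is strictly above the neck, yet $u \not\ge n_2$, so $u$ is incomparable to any element that lies below $n_2$ but not below $n_1$.) Hence ``dominates every element strictly below $N$, forcing comparability'' does not hold, and your claim that $a,b,c$ all lie on one side of the neck is unproved --- and since your step~2 pigeonhole presupposes that every middle element sits below some $n_i$, the whole (C1) argument rests on it. The claim is in fact true, but it needs its own case analysis: if $a$ is strictly above $N$ and $b$ strictly below, then incomparability of $a$ and $b$ forces (after relabeling) $b\le n_1$ only and $a\ge n_2$ only; then wherever the third middle element $c$ sits, something breaks: $c\in N$, or $c$ strictly below with $c\le n_2$, or $c$ strictly above with $c\ge n_1$, all make $c$ comparable to $a$ or to $b$; $c$ strictly below with $c\le n_1$ forces $j=b\vee c\le n_1$, contradicting $j\ge a$ strictly above $N$; and $c$ strictly above with $c\ge n_2$ forces $m=a\wedge c\ge n_2$, contradicting $m< b$ strictly below $N$. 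Only with this (or an equivalent) lemma in place does your proof of (C1) go through.

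The reason you ran into this is the choice of forbidden configuration. The paper uses the cover-preserving characterization instead: a finite modular lattice is distributive if and only if it contains no cover-preserving diamond, i.e.\ five distinct elements with $o \prec a,b,c \prec i$ \cite[p.~109]{gratzer2011}. A cover-preserving diamond occupies three consecutive levels of the graded lattice $S$, so it can straddle the neck only with $a,b,c$ all inside $N$, which is impossible since $|N|=2$; and for (C2), a cover-preserving diamond in $L$ not contained in $S$ must have $i=1_L$, which is impossible because $1_L$ covers only two elements. That yields a few-line proof containing exactly your pigeonhole (``three middle elements, two slots'') but with no need to control arbitrary, non-cover-preserving $M_3$ sublattices, whose members can sit at unrelated ranks. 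I recommend either switching to the cover-preserving characterization or inserting the corrected straddling lemma sketched above; your reduction and your (C2) argument can then stand as they are.
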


\begin{proof}
  We recall that a finite modular lattice is distributive if and only
  if it does not contain a~cover-preserving
  diamond~\cite[p.~109]{gratzer2011}, that is, five distinct elements
  $o,a,b,c,i$ such that $o \prec a,b,c \prec i$.
  \begin{enumerate}[(C1)]
  \item Let $L$ and $U$ be finite distributive vi-lattices and $S$
    their vertical 2-sum.  By~Lemma~\ref{lemma:modular-summable} $S$
    is modular.  Since $L$ and $U$ do not contain a cover-preserving
    diamond, the only possibility for~$S$ to contain one would be with
    $o \in L$ and $i \in U$, but the neck consisting of the two
    identified elements of $L$ and~$U$ cannot contain three distinct
    elements $a,b,c$.  Thus $S$~is distributive.
    
  \item Let $S$ be a finite distributive vi-lattice that is a vertical
    2-sum of $L$ and~$U$.  By Lemma~\ref{lemma:modular-summable} $L$
    and $U$ are modular vi-lattices.  Since $S$ does not contain
    a~cover-preserving diamond, the only possibility for $L$ to
    contain one would be with $i = 1_L$, but this is impossible
    because $L$ has only two coatoms.  Thus $L$~is distributive.  The
    case of~$U$ is similar. \qedhere
  \end{enumerate}
\end{proof}

\section{Computations}

\subsection{Method of classifying a lattice}
\label{subsec:classifying}

Given a graded vi-lattice represented by its covering graph, a short
piece of program code classifies the lattice into the types described
in Section~\ref{sec:symmetry}.  Calculating lattice length, counting
atoms and coatoms, and finding possible necks is straightforward.  For
analyzing the symmetry type we use the Nauty
library, version 2.7r1~\cite{nauty,mckay2014}.

Nauty returns the automorphism group of a given directed graph as a
list of generators $(\gamma_1, \ldots, \gamma_k)$.  To classify a
bottom piece we check if any generator swaps the coatoms; in that case
the coatoms are symmetric, otherwise fixed.  With a top piece we check
if any generator swaps the~atoms.

To classify a middle piece some more cases are required.  For each
generator, we check if it:
\begin{enumerate}[(A)]
  \item swaps the atoms and fixes the coatoms; or
  \item swaps both the atoms and the coatoms; or
  \item swaps the coatoms and fixes the coatoms.
\end{enumerate}
Generators that touch neither atoms nor coatoms are ignored.  Then:
\begin{enumerate}[(1)]
\item If there are no generators of types A/B/C, the piece is MF.
\item If there are generators of type A, but none of B/C, the piece is MA.
\item If there are generators of type C, but none of A/B, the piece is MC.
\item If there are generators of type B, but none of A/C, the piece is MH.
\item If there are generators of at least two of the types A/B/C, the piece is MX.
\end{enumerate}
It is easily seen that this procedure produces the correct
classification.  Note that for an MX piece, Nauty does not necessarily
return generators of types A~and~C.  It can instead return, for
example, a~generator $\gamma_i$ of type~A, and a~generator $\gamma_j$
of type~B.  But then $\gamma_i \circ \gamma_j$ is an automorphism that
swaps the coatoms and fixes the atoms, and then we know that the piece
is indeed~MX.

\subsection{Modular lattices}

Modular vi-lattices were previously generated and counted up to 30
elements in~\cite{kohonen2018}, and up to 33 elements in unpublished
work~\cite{a006981}.  That was done with a program that starts from
length-two seed lattices, and then adds new levels of elements
recursively.  The program lists exactly one representative lattice
from each isomorphism class.

We use here essentially the same lattice-generating program, modified
so that it skips all compositions, and generates only the pieces and
the specials.  The modification is simply that two-element levels are
not allowed between coatom and atom levels, because such a level would
form a neck.

With this program, all modular pieces and specials of $n \le 35$
elements were generated (up to isomorphism), and classified as
described in \S\ref{subsec:classifying}.  Because modular
vi-lattices are 2-summable (Lemma~\ref{lemma:modular-summable}), the
numbers of modular compositions of $n \le 35$ elements are then
calculated using Theorem~\ref{thm:main}.

The results of the exact counting are shown in
Table~\ref{table:modular}.  Rows MF--TF and ``special'' are from
direct counting with the lattice-generating program.  Rows CF--CN are
calculated with the recurrence in Theorem~\ref{thm:main}.  Row
``vi-latt.''  contains the numbers of all modular vi-lattices: this is
the sum of specials, pieces and compositions.  Finally, row ``all''
has the numbers of all modular lattices (including vertical sums of
vi-lattices), calculated with the vertical sum
recurrence~\eqref{eq:virule}.

An~exponential lower bound is derived as follows.  Using
Theorem~\ref{thm:main} with the known numbers of modular pieces of up
to 35~elements, and plugging in zeros for larger pieces (whose numbers
we do not know), we obtain lower bounds on $\CF_n$, $\CS_n$ and
$\CN_n$ for $n$ arbitrarily large.  We observe that the growth ratios
(from $n$ to $n+1$) of all three lower bounds settle a~little above
2.3122 for $n$~large enough.  To obtain rigorous lower bounds, we
choose a~convenient starting point $n=50$, convenient constant
coefficients in front, and apply induction.

\begin{theorem}
  The numbers of nonisomorphic modular compositions of types CF, CS
  and CN have the following lower bounds, when $n \ge 50$:
  \begin{align*}
    \CF_n &\ge 0.002910 \times 2.3122^n \\
    \CS_n &\ge 0.000035 \times 2.3122^n \\
    \CN_n &\ge 0.002470 \times 2.3122^n
  \end{align*}
  \label{thm:modular-bound}
\end{theorem}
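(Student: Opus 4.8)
The plan is to reduce the theorem to a finite numerical check carried out by induction on~$n$. First I would make rigorous the lower bounds described just before the statement. Every quantity appearing in Theorem~\ref{thm:main} is a nonnegative count, and the modular piece counts $\MF_k,\MA_k,\MC_k,\MX_k,\MH_k,\TF_k,\TS_k$ are known exactly for $k\le 35$. I discard from each recurrence every term whose piece factor has argument $k>35$ (positive but unknown), obtaining truncated sequences $\CFL_n,\CSL_n,\CNL_n$. Because truncation only lowers nonnegative coefficients, and the recurrences are monotone in those coefficients and in the $\LF_j,\LS_j$ terms, a short induction gives $\CFL_n\le\CF_n$, $\CSL_n\le\CS_n$, $\CNL_n\le\CN_n$ for all~$n$. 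It therefore suffices to prove the stated bounds for the truncated sequences.

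Next I would observe that the truncated recurrence becomes constant-coefficient once $n$ is large. A surviving term needs $k\le 35$, i.e.\ $j=n-k+4\ge n-31$; for $n\ge 81$ every such $j$ satisfies $j\ge 50>35$, so all piece contributions to $\underline{\LF}_j,\underline{\LS}_j$ vanish and $\underline{\LF}_j=\CFL_j$, $\underline{\LS}_j=\CSL_j$. Hence for $n\ge 81$,
\[
  \CFL_n=\sum_{k=6}^{35}\Big(\CFL_{n-k+4}(2\MF_k+\MA_k+\MH_k)+\CSL_{n-k+4}(\MF_k+\MA_k)\Big),
\]
and similarly $\CSL_n$ (coefficients $2\MC_k+\MX_k$ and $\MC_k+\MX_k+\MH_k$) and $\CNL_n$ (coefficients $2\TF_k+\TS_k$ and $\TF_k+\TS_k$). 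Note that the pair $(\CFL,\CSL)$ is self-contained, while $\CNL$ is a readout depending only on it.

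Writing $\lambda=2.3122$, $c_{\CF}=0.002910$, $c_{\CS}=0.000035$, $c_{\CN}=0.002470$, I would prove $\CFL_n\ge c_{\CF}\lambda^n$, $\CSL_n\ge c_{\CS}\lambda^n$, $\CNL_n\ge c_{\CN}\lambda^n$ for $n\ge 50$ by strong induction. The lookback window $k\in[6,35]$ has width~$31$, so I verify the three bounds directly over the base range $50\le n\le 80$ by computing the truncated sequences from the known small-piece data. For $n\ge 81$ the arguments $n-k+4$ all lie in $[n-31,n-2]\subseteq[50,n-2]$, so the hypothesis applies to each; substituting $\CFL_{n-k+4}\ge c_{\CF}\lambda^{n-k+4}$ and $\CSL_{n-k+4}\ge c_{\CS}\lambda^{n-k+4}$ and factoring out $\lambda^n$ collapses the step into three $n$-independent inequalities, of which the first is
\[
  c_{\CF}\sum_{k=6}^{35}\lambda^{4-k}(2\MF_k+\MA_k+\MH_k)+c_{\CS}\sum_{k=6}^{35}\lambda^{4-k}(\MF_k+\MA_k)\ge c_{\CF},
\]
with the analogous inequalities for $\CS$ and $\CN$. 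These six weighted sums are finite and computable from the tabulated counts, so the three inequalities can be checked numerically, completing the induction.

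The main obstacle is calibration: $\lambda$, $c_{\CF}$, $c_{\CS}$, $c_{\CN}$ must be chosen so that the base cases and all three induction inequalities hold simultaneously. Concretely, $\lambda$ must sit just below the growth rate at which the $2\times 2$ matrix whose entries are the sums $\sum_k\lambda^{4-k}(\cdots)$ acquires Perron eigenvalue~$1$, and $(c_{\CF},c_{\CS})$ must approximate the corresponding Perron eigenvector closely while being rounded down enough to keep the base cases valid; $c_{\CN}$ is then read off from the third inequality. This is what makes the stated constants look arbitrary. Everything else is routine: nonnegativity yields the truncation lower bound, and the induction step is a finite numerical verification.
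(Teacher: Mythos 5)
Your proposal is correct and follows essentially the same route as the paper: truncate the recurrence of Theorem~\ref{thm:main} by plugging in zeros for the unknown piece counts above $35$ elements, verify the bounds by direct calculation on a finite base range, and close a strong induction whose step is a finite numerical check. You merely make explicit what the paper delegates to its accompanying code --- the three $n$-independent inequalities and the Perron-eigenvector calibration of the constants --- and your tighter lookback window $[n-31,\,n-2]$ with base range $50 \le n \le 80$ is an equally valid variant of the paper's base range $50 \le n \le 85$ with window width $35$.
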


\begin{proof}
  For $50 \le n \le 85$ the claim follows by direct calculation with
  Theorem~\ref{thm:main}, using the numbers of pieces from
  Table~\ref{table:modular}, and zeros when the number of pieces is
  not known.

  For $n > 85$ the claim follows by induction on~$n$.  Let $n > 85$ be
  arbitrary, and assume that the claimed lower bounds hold on $\CF_m$,
  $\CS_m$ and $\CN_m$ when $n-35 \le m \le n-1$.  Then applying
  Theorem~\ref{thm:main} gives the claimed lower bounds on $\CF_n$,
  $\CS_n$ and $\CN_n$, which completes the induction.
\end{proof}

\begin{corollary}
  There are at least $0.005415 \times 2.3122^n$ nonisomorphic modular
  vi-lattices of~$n$~elements when $n \ge 50$.
\end{corollary}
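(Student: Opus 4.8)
The plan is to bound the number of modular vi-lattices from below by the number of modular compositions, and then to apply Theorem~\ref{thm:modular-bound}. First I would recall the trichotomy established in Section~\ref{sec:symmetry}: every graded vi-lattice is either a special, a piece, or a composition, and these three classes are disjoint. In particular the modular vi-lattices of $n$ elements contain, as a subfamily, the modular compositions of $n$ elements, so it suffices to count the latter.

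Next I would verify that every composition is of type CF, CS, or CN, so that the number of $n$-element modular compositions is exactly $\CF_n + \CS_n + \CN_n$. A composition is a vi-lattice, and a vertically indecomposable lattice cannot have a unique coatom (otherwise it would be the vertical sum of its lower part with a two-element chain), so it has two or more coatoms. If it has exactly two, they are either fixed (type CF) or symmetric (type CS); if it has three or more, it is of type CN. These three cases are mutually exclusive and exhaustive, so the counts $\CF_n$, $\CS_n$, $\CN_n$ partition the $n$-element compositions.

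Finally I would add the three lower bounds from Theorem~\ref{thm:modular-bound}. For $n \ge 50$ this gives
\[
  \CF_n + \CS_n + \CN_n
  \ge (0.002910 + 0.000035 + 0.002470) \times 2.3122^n
  = 0.005415 \times 2.3122^n ,
\]
and since the modular vi-lattices of $n$ elements are at least as numerous as the modular compositions, the claimed bound follows. The argument involves no genuine obstacle; the only point requiring care is the exhaustiveness of the CF/CS/CN classification for compositions, which rests on the fact that a vertically indecomposable lattice has at least two coatoms.
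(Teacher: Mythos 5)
Your proposal is correct and takes essentially the same route as the paper, whose entire proof is to add the three bounds from Theorem~\ref{thm:modular-bound} (indeed $0.002910 + 0.000035 + 0.002470 = 0.005415$). Your extra verification --- that the types CF, CS, CN disjointly and exhaustively classify the $n$-element modular compositions (using that a vertically indecomposable lattice has at least two coatoms), and that compositions form a subfamily of the vi-lattices --- merely makes explicit a step the paper leaves implicit.
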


\begin{proof}
  Add up the three lower bounds from Theorem~\ref{thm:modular-bound}.
\end{proof}

\begin{corollary}
  There are at least $0.02 \times 2.3713^n$ nonisomorphic modular
  lattices of~$n$~elements when $n \ge 100$.
\end{corollary}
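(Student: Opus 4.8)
The plan is to bound the number $f(n)$ of all $n$-element modular lattices below by iterating the vertical-sum recurrence~\eqref{eq:virule}, seeded with the lower bound on modular vi-lattices from the preceding corollary. Note first that~\eqref{eq:virule} applies verbatim to the modular family: a vertical sum of two lattices is modular if and only if both summands are (any $N_5$ must lie on one side of the cut element, since that element is comparable to everything), and every modular lattice decomposes uniquely as an iterated vertical sum of modular vi-lattices. Thus the ``all'' row of Table~\ref{table:modular} is governed by~\eqref{eq:virule} with $f$ and $f_\mathrm{vi}$ taken to be the modular counts. I would then prove, for $A=0.02$ and $\lambda=2.3713$, that $f(n)\ge A\lambda^{n}$ for all $n\ge 100$, by induction on~$n$.

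To make the induction self-contained I would first replace $f_\mathrm{vi}$ by an explicit lower-bound sequence $\ell$, setting $\ell(k)=f_\mathrm{vi}(k)$ (the exact values from Table~\ref{table:modular}) for $2\le k\le 35$, $\ell(k)=0$ for $36\le k\le 49$, and $\ell(k)=0.005415\times 2.3122^{k}$ for $k\ge 50$ by the preceding corollary. Define $g(1)=1$ and $g(n)=\sum_{k=2}^{n}\ell(k)\,g(n-k+1)$; since $\ell(k)\le f_\mathrm{vi}(k)$ for every~$k$, an immediate induction using~\eqref{eq:virule} gives $f(n)\ge g(n)$ for all~$n$, so it suffices to bound $g$ from below. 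For the inductive step I would discard all terms with $n-k+1<100$ and apply the hypothesis to the rest:
\begin{align*}
  g(n) \;\ge\; \sum_{k=2}^{n-99}\ell(k)\,g(n-k+1)
  \;\ge\; A\lambda^{n}\sum_{k=2}^{n-99}\ell(k)\,\lambda^{-(k-1)} .
\end{align*}
Hence the step closes as soon as $\sum_{k=2}^{n-99}\ell(k)\lambda^{-(k-1)}\ge 1$, and since the partial sums increase with~$n$, it is enough to verify this at the smallest $n$ of the inductive range.

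The crux is therefore the single inequality $\sum_{k}\ell(k)\lambda^{-(k-1)}\ge 1$ at $\lambda=2.3713$. Its left side splits into the finite sum $\sum_{k=2}^{35}f_\mathrm{vi}(k)\lambda^{-(k-1)}$, evaluated directly from Table~\ref{table:modular}, plus the geometric tail $0.005415\,\lambda\sum_{k\ge 50}(2.3122/\lambda)^{k}$, which converges because $\lambda>2.3122$ and is summed in closed form. I would choose $\lambda=2.3713$ as the largest value (to the stated precision) for which this combined quantity still reaches~$1$, and then fix $A=0.02$ together with a threshold $N_1$ so that the finitely many base cases $100\le n<N_1$ check out by direct evaluation of $g(n)$. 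The main obstacle is exactly this balancing act: $\lambda$ must be pushed as close as possible to the true root of $\sum_k f_\mathrm{vi}(k)\lambda^{-(k-1)}=1$ while the crux inequality remains provable from only the finitely many exact counts up to $35$ elements and the geometric tail, and simultaneously $A$ must be small enough for the base cases yet large enough to be worth stating. Once these constants are pinned down, everything reduces to routine finite computation.
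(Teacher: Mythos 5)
Your overall architecture---seed the vertical-sum recurrence \eqref{eq:virule} with a lower-bound sequence $\ell$, verify a finite base range directly, and close an induction via the single inequality $\sum_k \ell(k)\,\lambda^{-(k-1)} \ge 1$---is exactly the paper's (the paper verifies $100 \le n \le 400$ by direct calculation and then inducts using the previous $300$ values). The genuine gap is numerical: your specific seed is too weak, and the crux inequality you rely on is \emph{false} at $\lambda = 2.3713$. With $\ell(k)$ equal to the exact vi-counts from Table~\ref{table:modular} for $2 \le k \le 35$, one computes $\sum_{k=2}^{35} f_\mathrm{vi}(k)\,\lambda^{-(k-1)} \approx 0.787$ (the $k=2$ term alone is $1/\lambda \approx 0.422$; the later terms hover around $0.006$--$0.008$ each). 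Your geometric tail contributes
\[
0.005415\,\lambda \sum_{k \ge 50} (2.3122/\lambda)^k \;=\; 0.005415 \cdot 2.3713 \cdot \frac{r^{50}}{1-r} \;\approx\; 0.146,
\qquad r = \tfrac{2.3122}{2.3713} \approx 0.97508 .
\]
The total is about $0.933 < 1$, so $2.3713$ is not ``the largest value for which the combined quantity still reaches~$1$''; the root of your equation is only about $2.36$. Moreover, since your $g$ then grows like $\lambda_*^n$ with $\lambda_* < 2.3713$, we have $g(n)/2.3713^n \to 0$, so no choice of the constant $A$ and no lengthening of the base range can rescue the stated bound from this seed: the shortfall is structural, not a matter of tuning constants.

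What the paper uses, and what your plan discards, is the input mass between $36$ and $400$ elements. The paper feeds \eqref{eq:virule} with lower bounds on vi-lattices of \emph{every} size up to $400$, obtained numerically by running the recurrence of Theorem~\ref{thm:main} with the known piece counts up to $35$ elements and zeros beyond (this is how the bounds of Theorem~\ref{thm:modular-bound} are produced in the first place). Those computed values are nonzero already for $36 \le k \le 49$, where you put zeros---forfeiting on the order of $0.06$ of the needed sum---and for $k \ge 50$ they exceed the deliberately rounded-down closed form $0.005415 \times 2.3122^k$, since their growth ratio settles strictly above $2.3122$. Only with this richer data does the analogue of your crux inequality, truncated at $k = 301$ to match an induction over the previous $300$ values, get past~$1$ at $\lambda = 2.3713$; the margin is thin, which is precisely why the paper needs the long base range $100 \le n \le 400$ rather than a short one. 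The repair is straightforward: keep your induction scheme, but replace the zeros at $36$--$49$ and the closed-form tail by the recurrence-computed lower bounds for all $k \le 301$ (or up to $401$), and verify the resulting finite inequality numerically as the paper's accompanying code does.
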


\begin{proof}
  For $100 \le n \le 400$ the claim follows by direct calculation
  with recurrence~\eqref{eq:virule}, using as input the lower bounds on
  vi-lattices of up to $400$ elements computed using
  Theorem~\ref{thm:modular-bound}.

  For the induction step, let $n > 400$ be arbitrary, and
  assume that the claimed lower bound holds for the previous $300$
  values.  Applying~\eqref{eq:virule} then gives the claimed lower
  bound for the number of $n$-element modular lattices.  This completes
  the induction.
\end{proof}
  
\subsection{Distributive lattices}
\label{subsec:distributive}

Distributive vi-lattices were previously counted up~to 49 elements by
Ern\'{e} et al.~\cite{erne2002,a006982}.  To~count $n$-element
distributive lattices, they actually generated posets that have
$n$~antichains; these are in a bijective correspondence with the
distributive lattices.

Our approach is more direct.  We generate the distributive lattices
directly, using the same program that we used for modular lattices,
with some modifications.  The first modification is a condition that
ensures that we generate only the distributive lattices.  Since the
original program generates modular lattices, we only need to check
that whenever a new element is created, it does not create a
cover-preserving diamond~\cite[p.~109]{gratzer2011}.  This ensures
that we generate the distributive lattices but no~others.

We also employ an important optimization that cuts short search
branches that cannot lead to distributive lattices.  Our
lattice-generating program (see~\cite{kohonen2018} for more details)
builds lattices levelwise, top down, until the number of elements
reaches a preset maximum.  When creating a new level, it adds new
elements in decreasing order of updegree.  The last step on each level
is thus to create meet-irreducible elements.  In the original
algorithm, this step can create a large number of meet-irreducible
elements, limited only by the maximum lattice size.  But in a
distributive lattice we can limit their number as follows.  We~recall
(see Corollary~112 in~\cite{gratzer2011}) that the number of
meet-irreducible elements in a distributive lattice equals the lattice
length.  As~we build a lattice, we keep track of the meet-irreducible
elements created so far, and at~each level we compute an updated upper
bound~$R$ on the lattice length (based on the current length and the
budget of remaining elements).  The number of meet-irreducible
elements, including the ones already created, is then limited to be at
most~$R$.

With this program, all distributive pieces and specials of $n \le 60$
elements were generated (up to isomorphism), and classified with the
method described in \S\ref{subsec:classifying}.  Compositions
were then counted using Theorem~\ref{thm:main}.  The results are shown
in Table~\ref{table:distributive}.

An~exponential lower bound is derived in the same way as with
modulars.  Using Theorem~\ref{thm:main} with the known numbers of
distributive pieces of up to 60~elements, and plugging in zeros for
larger pieces, we obtain lower bounds on $\CF_n$, $\CS_n$ and $\CN_n$
whose growth ratios settle a~little above 1.7250 for $n$~large enough.
To obtain rigorous lower bounds, we choose a~convenient starting point
$n=100$, convenient constant coefficients in front, and apply
induction.

\begin{theorem}
  The numbers of nonisomorphic distributive compositions of types CF,
  CS and CN have the following lower bounds, when $n \ge 100$:
  \begin{align*}
    \CF_n &\ge 0.010600 \times 1.7250^n \\
    \CS_n &\ge 0.000092 \times 1.7250^n \\
    \CN_n &\ge 0.001950 \times 1.7250^n
  \end{align*}
  \label{thm:distributive-bound}
\end{theorem}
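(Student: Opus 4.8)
The plan is to mirror exactly the structure of the proof of Theorem~\ref{thm:modular-bound}, since the distributive case is 2-summable by Lemma~\ref{lemma:distributive-summable} and the same machinery applies. The argument splits into a finite base-case verification and an inductive step. First I would establish the base cases: for $100 \le n \le 135$, the three claimed lower bounds are verified by direct computation with the recurrence of Theorem~\ref{thm:main}, using the exact counts of distributive pieces of up to $60$ elements from Table~\ref{table:distributive}, and substituting zero whenever a required piece count exceeds the range we have computed. Because each bound must hold over a window of $35$ preceding values before the induction can take over, the base range must be wide enough that the induction step at $n$ always refers back into the verified (or previously inducted) region.

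The heart of the argument is the induction step, which I expect to be routine once set up correctly. Assume $n$ is large (say $n > 135$) and that the three claimed bounds hold for all $m$ with $n-35 \le m \le n-1$. The recurrences in Theorem~\ref{thm:main} express $\CF_n$, $\CS_n$, $\CN_n$ as sums over $j$ from $6$ to $n-2$ of products $\LF_j \cdot (\cdots)$ and $\LS_j \cdot (\cdots)$, where $k = n-j+4$ and $\LF_j, \LS_j$ are nonnegative combinations of $\CF_j, \CS_j, \BF_j, \MF_j, \ldots$ When $j$ is large, the composition-type terms $\CF_j$ and $\CS_j$ inside $\LF_j$ and $\LS_j$ dominate, and one may bound them below using the inductive hypothesis; when $k = n-j+4$ is small (i.e.\ $j$ near $n-2$), the middle- and top-piece factors $\MF_k, \MC_k, \TF_k$, etc., are known exactly from the table. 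The key point is that each term on the right-hand side is nonnegative, so discarding all but a convenient selection of terms still yields a valid lower bound. Substituting the inductive lower bounds $c \cdot 1.7250^m$ for the composition factors and the tabulated exact values for the small-piece factors, one checks that the resulting sum is at least the claimed $c \cdot 1.7250^n$ for each of the three types.

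The main obstacle, as in the modular case, is purely arithmetic rather than conceptual: one must verify that the chosen constant coefficients $0.010600$, $0.000092$, $0.001950$ are simultaneously consistent across all three coupled recurrences. Because $\LF_j$ and $\LS_j$ each blend CF-type and CS-type (and piece-type) counts, the three inequalities are interdependent, and the small CS-coefficient must be chosen so that the $\CS_n$ bound survives even though $\CS$ contributes the least. The verification that the induction closes amounts to confirming, at the growth ratio $1.7250$, that each coefficient is no larger than what the recurrence produces from the preceding window. In practice this is established by the same direct calculation that covers the base range, extended far enough (e.g.\ to $n$ in the low hundreds) to confirm empirically that the growth ratio has stabilized just above $1.7250$; the formal induction then certifies the bound for all $n \ge 100$.
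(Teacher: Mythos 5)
Your overall strategy---finite direct verification via Theorem~\ref{thm:main}, followed by an induction that feeds the claimed bounds back through the recurrence---is exactly the paper's. But you have carried the \emph{modular} parameters over to the distributive case, and this breaks the induction. The width of the induction window is dictated by the largest piece size for which exact counts exist: in the recurrence the piece factors with $k=n-j+4$ are set to zero beyond the computed range, so the only surviving terms are those with $k$ at most the piece-count limit. For modular lattices that limit is $35$ elements, giving the window $n-35\le m\le n-1$ and base range $50\le n\le 85$; for distributive lattices the pieces are counted up to $60$ elements, and the paper accordingly verifies the base range $100\le n\le 161$ and inducts with the hypothesis holding for $n-61\le m\le n-1$. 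With your $35$-wide window and base range $100\le n\le 135$, the inductive hypothesis only covers $j\ge n-35$, so you can retain only terms with $k\le 39$ and must discard every contribution from pieces of $40$ to $60$ elements. The growth ratio produced by that truncated recurrence is strictly smaller than the one from the full data---the paper itself notes the ratio settles only ``a little above $1.7250$'' when \emph{all} pieces up to $60$ elements are used, and that far less data yields visibly smaller bases (e.g.\ $\Omega(1.6818^n)$ from middle pieces of at most $21$ elements)---so your arithmetic check that the coefficients reproduce themselves at base $1.7250$ would fail, and the induction would not close.

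The fix is purely parametric, not conceptual: verify $100\le n\le 161$ directly with Theorem~\ref{thm:main} (zeros for unknown piece counts), and in the induction step for $n>161$ assume the three bounds for $n-61\le m\le n-1$. Then every $j$ with a nonzero piece factor satisfies $j\ge n-56$ and lies inside the hypothesis window, and applying Theorem~\ref{thm:main} yields the claimed bounds on $\CF_n$, $\CS_n$ and $\CN_n$. Everything else in your outline---nonnegativity of discarded terms, the coupling of the three recurrences through $\LF_j$ and $\LS_j$, and the need to choose the small CS coefficient consistently---matches the paper's argument.
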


\begin{proof}
  For $100 \le n \le 161$ the claim follows by direct calculation with
  Theorem~\ref{thm:main}, using the numbers of pieces from
  Table~\ref{table:distributive}, and zeros when the number of pieces
  is not known.

  For $n > 161$ the claim follows by induction on~$n$.  Let $n > 161$ be
  arbitrary, and assume that the claimed lower bounds hold on $\CF_m$,
  $\CS_m$ and $\CN_m$ when $n-61 \le m \le n-1$.  Then applying
  Theorem~\ref{thm:main} gives the claimed lower bounds on $\CF_n$,
  $\CS_n$ and $\CN_n$, which completes the induction.
\end{proof}

\begin{corollary}
  There are at least $0.012642 \times 1.7250^n$ nonisomorphic
  distributive vi-lattices of~$n$~elements when $n \ge 100$.
\end{corollary}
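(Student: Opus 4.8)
The plan is to mirror the corollary that followed Theorem~\ref{thm:modular-bound} in the modular case. By Definition~\ref{def:mainclass}, every distributive vi-lattice of $n$ elements is a special, a piece, or a composition, and by the classification of composition symmetry types each composition carries exactly one of the types CF, CS, or CN. Consequently the number of nonisomorphic $n$-element distributive compositions equals $\CF_n + \CS_n + \CN_n$, and the number of all nonisomorphic $n$-element distributive vi-lattices is at least this sum, since the specials and pieces contribute nonnegatively.

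First I would apply Theorem~\ref{thm:distributive-bound}, which holds for $n \ge 100$, to bound each of the three summands below by its stated multiple of $1.7250^n$. Adding the three inequalities, the coefficients combine additively:
\[
  \CF_n + \CS_n + \CN_n \ge (0.010600 + 0.000092 + 0.001950)\,1.7250^n = 0.012642 \times 1.7250^n.
\]
Because every distributive composition is in particular a distributive vi-lattice, and nonisomorphic compositions remain nonisomorphic as vi-lattices, the right-hand side is a valid lower bound on the number of nonisomorphic distributive vi-lattices of $n$ elements for all $n \ge 100$, as claimed.

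The only point requiring care --- and it is not truly an obstacle --- is the observation that the types CF, CS, and CN are mutually exclusive and jointly exhaust all compositions, so that forming the sum $\CF_n + \CS_n + \CN_n$ neither double-counts nor omits any composition. This is immediate from the symmetry-type definition: a composition has either two fixed coatoms (type CF), two symmetric coatoms (type CS), or three or more coatoms (type CN), and these three alternatives are pairwise disjoint and cover all cases. I therefore expect the whole argument to collapse to the single arithmetic step of summing the three stated coefficients, exactly as in the modular corollary.
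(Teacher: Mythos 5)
Your proposal is correct and matches the paper's proof, which consists precisely of adding the three lower bounds from Theorem~\ref{thm:distributive-bound} (indeed $0.010600 + 0.000092 + 0.001950 = 0.012642$). Your extra remarks---that CF, CS, CN are pairwise disjoint and that compositions form a subset of the vi-lattices---merely make explicit what the paper leaves implicit, so the argument is the same.
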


\begin{proof}
  Add up the three lower bounds from
  Theorem~\ref{thm:distributive-bound}.
\end{proof}

\begin{corollary}
  There are at least $0.088 \times 1.8433^n$ nonisomorphic
  distributive lattices of~$n$~elements when $n \ge 100$.
\end{corollary}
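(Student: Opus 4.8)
The plan is to mirror the proof of the preceding corollary for modular lattices, now applying the vertical-sum recurrence~\eqref{eq:virule} inside the distributive family. First I would record that~\eqref{eq:virule} is valid for distributive lattices. A vertical sum has a cut point through which every element of the lower part is comparable to every element of the upper part, so neither an $M_3$ nor an $N_5$ sublattice can straddle the cut; hence a vertical sum is distributive exactly when both summands are, and dually every vi-component of a distributive lattice is a distributive sublattice. Writing $f(n)$ and $f_\mathrm{vi}(n)$ for the numbers of $n$-element distributive lattices and distributive vi-lattices, the recurrence
\[
  f(n) = \sum_{k=2}^{n} f_\mathrm{vi}(k)\, f(n-k+1), \qquad n \ge 2,
\]
therefore holds verbatim, with $f(1)=1$.

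The input is a lower bound on $f_\mathrm{vi}(k)$: the exact values from Table~\ref{table:distributive} for small $k$, together with $f_\mathrm{vi}(k) \ge 0.012642 \times 1.7250^k$ for $k \ge 100$ from the preceding corollary, inserting zeros conservatively in the intermediate range where no better bound is available. Set $\rho = 1.8433$ and $c = 0.088$; the goal is $f(n) \ge c\rho^n$ for $n \ge 100$. For a finite stretch $100 \le n \le N_0$ I would verify the bound by direct computation, bootstrapping lower bounds on $f(n)$ from $f(1)=1$ through the recurrence and the input lower bounds on $f_\mathrm{vi}$.

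For $n > N_0$ I would argue by induction over a fixed window of the previous $W$ values. Keeping only the terms with $2 \le k \le W+1$, so that $n-k+1$ lies in the window and the induction hypothesis $f(n-k+1) \ge c\rho^{\,n-k+1}$ applies, and discarding the remaining nonnegative terms, gives
\[
  f(n) \;\ge\; c\,\rho^{\,n+1} \sum_{k=2}^{W+1} f_\mathrm{vi}^{\mathrm{LB}}(k)\, \rho^{-k},
\]
so the induction closes provided the finite partial sum $\sum_{k=2}^{W+1} f_\mathrm{vi}^{\mathrm{LB}}(k)\,(1/\rho)^{k-1}$ is at least $1$. The value $\rho = 1.8433$ is chosen slightly below the true growth rate precisely so that this truncated sum already clears $1$; the coefficient $c=0.088$ is then forced by the base range, exactly as in the modular case.

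The main obstacle is quantitative rather than conceptual. The summands $f_\mathrm{vi}^{\mathrm{LB}}(k)\,(1/\rho)^{k-1}$ decay only like $(1.7250/1.8433)^k \approx 0.936^k$, so the partial sum approaches its limit slowly and a window of order a couple hundred values is needed to push it past~$1$; moreover the small-$k$ terms, where the exact counts of Table~\ref{table:distributive} far exceed the asymptotic bound, contribute the bulk of the sum, so these exact values must be fed in rather than the exponential lower bound alone. Handling the gap in $k$ between the tabulated range and $k \ge 100$ by inserting zeros, and confirming that the truncated sum nonetheless exceeds~$1$ with the stated $\rho$, is the delicate step; the rest is the same bootstrap-plus-induction template used for modular lattices.
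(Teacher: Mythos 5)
Your architecture is exactly the paper's: justify recurrence~\eqref{eq:virule} inside the distributive family (the paper takes this for granted; your cut-point argument is fine), verify the bound directly on a base range (the paper uses $100 \le n \le 400$), then induct with a fixed window of previous values (the paper uses $300$); and your closing condition $\sum_{k=2}^{W+1} f_\mathrm{vi}^{\mathrm{LB}}(k)\,\rho^{-(k-1)} \ge 1$ is the right one. The genuine gap is your choice of input on the band $61 \le k \le 99$: inserting zeros there is not merely conservative, it makes the decisive inequality fail. Summing the exact vi-lattice counts of Table~\ref{table:distributive} against $\rho = 1.8433$ gives $\sum_{k=2}^{60} f_\mathrm{vi}(k)\,\rho^{-(k-1)} \approx 0.9937$, and the tail $k \ge 100$ with $f_\mathrm{vi}(k) \ge 0.012642 \times 1.7250^k$ adds only about $0.0005$ (the ratio is $1.7250/1.8433 \approx 0.9358$, so the entire geometric tail from $k=100$ contributes $\approx 0.023 \times 0.9358^{100}/(1-0.9358) \approx 5 \times 10^{-4}$). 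With zeros in the gap your truncated sum therefore tops out near $0.994 < 1$ for \emph{every} window size $W$, and the induction at $\rho = 1.8433$ never closes. This is no accident: $1.8433$ is essentially the observed ratio $f(60)/f(59)$ of all distributive lattices, i.e.\ the root of $\sum_k f_\mathrm{vi}(k)x^{-(k-1)} = 1$ given all available data, so the sum sits right at the edge and no band of terms can be thrown away.

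The repair lies inside the paper's own machinery, and is what its proof means by ``lower bounds on vi-lattices of up to $400$ elements computed using Theorem~\ref{thm:distributive-bound}'': iterating the recurrence of Theorem~\ref{thm:main} with the known piece counts ($n \le 60$, zeros only for the unknown larger pieces) yields valid lower bounds on $\mathrm{CF}_n + \mathrm{CS}_n + \mathrm{CN}_n$, hence on $f_\mathrm{vi}(n)$, for every $n \ge 61$ --- not just for $n \ge 100$, where the clean closed forms of Theorem~\ref{thm:distributive-bound} are claimed. These intermediate bounds grow at rate roughly $1.725$, and the band $61 \le k \le 99$ then contributes about $0.006$ to the windowed sum, almost exactly the shortfall, pushing it just above $1$. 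Note that you cannot simply extend the closed form $0.0106 \times 1.7250^n$ below $n = 100$ (it can narrowly fail there, which is presumably why the theorem starts at $100$); you must feed in the recurrence-computed values themselves. With that one change --- nonzero, recurrence-derived bounds on the gap instead of zeros --- your plan coincides with the paper's proof.
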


\begin{proof}
  For $100 \le n \le 400$ the claim follows by direct calculation with
  recurrence~\eqref{eq:virule}, using as input the lower bounds on
  vi-lattices of up to $400$ elements computed using
  Theorem~\ref{thm:distributive-bound}.

  For the induction step, let $n > 400$ be arbitrary, and assume that
  the claimed lower bound holds for the previous $300$ values.
  Applying~\eqref{eq:virule} then gives the claimed lower bound for the
  number of $n$-element distributive lattices.  This completes the
  induction.
\end{proof}

\subsection{Semimodular lattices}

Although semimodular vi-lattices are 2-summable, vertical 2-sum is not
very useful with them.  For example, of the $1\;753\;185\;150$
semimodular vi-lattices of $25$~elements, only about $23\%$ are
compositions.  This is in stark contrast with modular and distributive
lattices.  Basically this is because semimodular lattices are short
and wide (cf.~Figures 4--5 in~\cite{kohonen2018}).  For this reason we
do not include tables of semimodular lattices here, but such tables
can be easily computed using the accompanying program code.

We could apply the same techniques as above to obtain an exponential
lower bound.  But an asymptotically stronger lower bound is obtained
by constructing semimodular lattices from Steiner triple systems.
A~\emph{Steiner triple system} is a set of $k$~elements
(\emph{points}) and a collection of their 3-sets (\emph{triples}),
such that each pair of distinct points occurs in exactly one triple.
By counting the pairs it is easily seen that the number of triples
must be $k(k-1)/6$.  It is known that Steiner triple systems on
$k$~points exist if and only if $k \equiv 1 \text{ or } 3 \pmod 6$;
such values of $k$ are called \emph{admissible}.

Given a Steiner triple system on $k \ge 7$ points, if we take the
points as atoms, the triples as coatoms, and augment a top and a
bottom, we obtain a rank-three semimodular vi-lattice, because each
pair of atoms is covered by exactly one coatom.  The lattice has
$k+k(k-1)/6+2$ elements.

\begin{theorem}
  For any $n\ge 100$, the number of nonisomorphic semimodular
  rank-three vi-lattices containing $n$~elements is at least
  \[
  \big( 0.3286 \; n^{1/8} \big)^n.
  \]
  \label{thm:steinerbound}
\end{theorem}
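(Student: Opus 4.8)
The plan is to produce many nonisomorphic lattices as the rank-three semimodular vi-lattices attached to nonisomorphic Steiner triple systems, and to show that their sheer number forces super-exponential growth in $n$. Fix an admissible $k$ and write $n_0(k)=k+k(k-1)/6+2$ for the size of the resulting lattice. First I would verify that the assignment from systems to lattices is injective up to isomorphism. Any lattice isomorphism is rank-preserving, hence carries atoms to atoms and coatoms to coatoms and respects the covering incidence between them; since the atoms are exactly the points and the coatoms are exactly the triples, such an isomorphism restricts to an isomorphism of the underlying triple systems. Consequently nonisomorphic systems give nonisomorphic lattices, and the number of $n_0(k)$-element rank-three semimodular vi-lattices is at least $T(k)$, the number of nonisomorphic Steiner triple systems on $k$ points.

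The engine of the bound is a lower estimate for $T(k)$, and this is the step I expect to be the main obstacle. Here I would invoke the (deep) asymptotic enumeration of triple systems: the number of labeled systems on $k$ points is $\big((1+o(1))k/e^{2}\big)^{k^{2}/6}$, and passing to isomorphism classes costs a division by at most $k!$, which is negligible in the logarithm because $\log k! = O(k\log k) = o(k^{2}\log k)$. Thus $\log T(k) = (1+o(1))\tfrac{k^{2}}{6}\log(k/e^{2})$, already of order $k^{2}\log k$, so the counts grow faster than any exponential. I would also note that a crude, self-contained estimate of the shape $T(k)\ge k^{\,c\,k^{2}}$ for a fixed $c>0$ (obtainable elementarily, for instance from a permanent bound applied to symmetric idempotent Latin squares) would already suffice, since the target exponent $1/8$ is very generous.

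Next I would change variables from $k$ to $n$. Solving $n_0(k)=k+k(k-1)/6+2$ gives $k=(1+o(1))\sqrt{6n}$ and $k^{2}/6=(1+o(1))n$, so the base $k/e^{2}$ becomes $(1+o(1))(\sqrt6/e^{2})\,n^{1/2}$. With $\sqrt6/e^{2}\approx 0.331$ this shows that the admissible sizes already carry, up to lower-order factors, at least $(0.331\,n^{1/2})^{n}$ nonisomorphic lattices. The claimed exponent $1/8$ is far below $1/2$, and it is exactly this slack that I would use to swallow the $(1+o(1))$ terms, the factor $k!$, and the gap between $n_0(k)$ and $k^{2}/6$: for $n\ge 100$ the ratio of the two candidate right-hand sides is of order $n^{3/8}$, which comfortably absorbs all these corrections. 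I would further observe that $0.3286\,n^{1/8}$ stays below $1$ until $n$ is several thousand, so the statement is trivially true in that initial range and its content is purely asymptotic, where the strong estimate dominates.

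It remains to cover every $n\ge 100$, not only the admissible sizes, and I would do this by padding. For arbitrary $n$ let $k$ be the largest admissible integer with $n_0(k)\le n$, and adjoin $t=n-n_0(k)$ new coatoms, each covering one fixed atom. One verifies directly that the result is still a lattice, that it is semimodular by Birkhoff's condition (the crucial point being that every pair of atoms retains a common upper cover, namely its unique triple, which is precisely why a full system and not a partial one is required), and that it is vertically indecomposable once $k\ge 4$. Since the padding coatoms have updegree $1$ while triples have updegree $3$, any isomorphism of padded lattices again restricts to an isomorphism of the underlying systems, so the lower bound $T(k)$ is preserved. Consecutive admissible values of $n_0$ differ by $O(k)=O(\sqrt n)$, so $n=k^{2}/6+O(\sqrt n)$ throughout each padding window and the estimate of the previous paragraph degrades only inside the available slack. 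Assembling the constants then yields $\big(0.3286\,n^{1/8}\big)^{n}$ for all $n\ge 100$.
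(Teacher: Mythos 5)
Your construction, injectivity argument, and padding step all coincide with the paper's proof: the paper likewise takes points as atoms and triples as coatoms of a rank-three semimodular vi-lattice, chooses the largest admissible $k$ with $k+k(k-1)/6+2\le n$ (using that admissible values are at most $4$ apart, so $k^2>3n$), and pads with $n-n'$ extra coatoms over a single atom. The genuine gap is in your counting engine. You feed in Keevash's asymptotic enumeration $\big((1+o(1))k/e^2\big)^{k^2/6}$, whose error term is \emph{ineffective}: it guarantees your estimate only for $n\ge N_0$ with $N_0$ unspecified and possibly enormous. Your fallback for small $n$ --- that $0.3286\,n^{1/8}<1$ until $n\approx 7000$, making the claim trivial there --- is correct but only covers $n\lesssim 7000$; nothing in your argument shows the asymptotic regime begins by then, so the range between $\approx 7000$ and $N_0$ is left unproved, and it cannot be patched computationally since exact or certified counts of Steiner triple systems at those orders are unavailable. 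The slack ``of order $n^{3/8}$'' absorbs the corrections only asymptotically; it certifies no finite threshold.

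Your proposed elementary fallback $T(k)\ge k^{\,c\,k^2}$ ``for a fixed $c>0$'' also does not suffice as stated, because the exponent $1/8$ is \emph{not} generous relative to an arbitrary $c$: via $k^2>3n$ one gets $\log T(k)\ge \tfrac{3c}{2}\,n\log(3n)$ plus lower-order terms, so matching $n^{1/8}$ in the base forces $c\ge 1/12$. This is exactly why the paper uses Wilson's explicit, everywhere-valid bound $N\ge(e^{-5}k)^{k^2/12}$ on \emph{nonisomorphic} systems: combined with $k>\sqrt{3n}$ it yields
\[
N \ge \big(e^{-5}\sqrt{3n}\big)^{n/4} = \big(e^{-5/4}\,3^{1/8}\,n^{1/8}\big)^{n} \ge \big(0.3286\,n^{1/8}\big)^{n}
\]
directly for all $n\ge 100$, with the constant $0.3286$ engineered from $e^{-5/4}3^{1/8}$ --- no asymptotic hand-off, no uncovered middle range, and no division by $k!$ needed. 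Replacing your Keevash step by Wilson's theorem (which the paper cites Keevash only as a potential future improvement upon) closes your gap and recovers the stated bound verbatim. One minor slip: your padding coatoms are distinguished from triples by covering one atom rather than three (lower covers), not by ``updegree $1$ versus $3$''; the recoverability argument itself is fine.
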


\begin{proof}
  Let $n \ge 100$ be given.  Choose the largest admissible~$k$ such
  that
  \[
  k+k(k-1)/6+2 \le n.
  \]
  Then $k \ge 21$, and because admissible values are at most 4~units
  apart, we have
  \begin{equation}
    n < (k+4) + (k+4)(k+3)/6+2 < k^2/3.
    \label{eq:steinerrounding}
  \end{equation}
  Let $N$ be the number of nonisomorphic Steiner triple systems
  on $k$~points.  By~Wilson's Theorem~2~\cite{wilson1974}, we have
  \[
  N \ge (e^{-5}k)^{k^2/12},
  \]
  and using the bound $k > \sqrt{3n}$
  from~\eqref{eq:steinerrounding} we obtain
  \[
  N \ge \big(e^{-5} \cdot \sqrt{3n} \big)^{n/4}
  = \big(e^{-5/4} \cdot 3^{1/8} \cdot n^{1/8} \big)^n
  \ge \big(0.3286 \; n^{1/8} \big)^n.
  \]

  From these $N$ Steiner triple systems on $k$~points, we can form $N$
  nonisomorphic semimodular rank-three vi-lattices that have $n' =
  k+k(k-1)/6+2$ elements, and by our choice of~$k$, we have $n' \le
  n$.  To each lattice, add $n-n'$ extra coatoms covering an
  arbitrarily chosen atom of the highest updegree.  This operation
  makes the lattices have exactly $n$~elements, and preserves
  semimodularity and nonisomorphism, so the claim follows.
\end{proof}

The lower bound in Theorem~\ref{thm:steinerbound} is very loose (it
does not even exceed~$1$ until $n \approx 7000$), and is presented in
simple terms just to demonstrate the asymptotic behaviour, that the
number of semimodular vi-lattices grows faster than any exponential
in~$n$.  The bound might be improved in several ways, for example, by
using Keevash's recent improvement on Wilson's lower
bound~\cite{keevash2018}.

\subsection{Notes on computation}
The main computational load was in generating the~pieces and specials.
For~the largest sizes this was parallelized by running the
lattice-generating program until a~predefined number of elements had
been added.  The search state at those points was saved to a~file, and
the remaining work was divided among worker processes.  For modular
lattices of $33$, $34$ and $35$~elements, this computation took $8.9$,
$23.7$ and $63.1$ cpu-core-days on Intel Xeon Gold 6230 processors
(nominally~2.1~GHz).  The~time grows roughly $2.66$-fold
as~$n$~grows by one.  For~distributive lattices of $58$, $59$ and
$60$~elements, the computation took $6.3$, $10.4$ and $17.2$
cpu-core-days, growing roughly $1.66$-fold each time.

The optimization in \S\ref{subsec:distributive} that made it possible
to count distributive lattices up to $60$ elements is just one
example, out of many, where the speed of a combinatorial search is
greatly affected by a simple, innocent-looking bounding condition.
Here its implementation takes about a~dozen lines of code (see the
accompanying program code for details).  But already at $30$ elements,
it speeds up the generation of distributive pieces and specials from
153~seconds to 0.4~seconds; and the savings ratio keeps improving as
the lattices grow.

Unfortunately, it is not always easy to come up with conditions that
actually have a great impact, and are also fast enough to compute
during the search.  Given the large existing theory of the structure
of distributive and modular lattices, it is conceivable that our
lattice-generating program could still be much improved by imposing
some other, so far untried, bounding conditions.

\subsection{Partial verification}

We describe here some of the methods that were used to partially
verify the correctness of the computational results.

The pieces and specials were generated and classified twice, on
different computers.  The counts and the actual lattice listings were
verified to be identical by comparing their MD5 checksums.  This would
help against transient hardware and operational errors, but not
against systematic errors in the program the.

The number of MA~pieces equals the number of MC~pieces in each column
of Tables~\ref{table:modular} and~\ref{table:distributive}.  This is
as it should, because such pieces are duals of each other.  The same
holds between BF~and~TF pieces, and between BS~and~TS pieces.

We also performed a more thorough duality check.  The \emph{rank
  sequence} of a~graded lattice is the sequence of its level sizes
from bottom to top.  The rank sequences of a lattice and its dual are
reverses of each other.  In~Figure~\ref{fig:pieces} the BF example
piece has rank sequence $(1,3,2,1)$, and its dual, the TF example
piece has $(1,2,3,1)$.  We counted the occurrences of each combination
of symmetry type and rank sequence in pieces and specials, and
verified that the numbers match between the dual pairs (MF--MF,
MA--MC, MX--MX, MH--MH, BF--TF, BS--TF, and special--special).  For
example, among all $60$-element distributive MX pieces, there are
$2\;137$ whose rank sequence is
$(1,2,3,4,6,5,4,4,4,4,4,4,3,3,3,3,2,1)$, and $2\;137$ whose sequence
is its reverse.  If any of the lattice listings were accidentally
truncated or corrupted, this would likely be detected as a mismatch.
An error in the lattice-generating logic or in the classification code
would also have a good chance of causing a mismatch.

As a consistency check of the Cartesian counting logic, we directly
generated, classified and counted all distributive vi-lattices of
50~elements \emph{including} compositions.  In each class, the count
thus obtained matches the calculated count in
Table~\ref{table:distributive}.  Generating these 50-element
lattices took $12.6$ cpu-core-days, about 70~times longer than with
compositions excluded, which also demonstrates the benefits of
Cartesian counting.

Our totals on rows ``vi-latt.'' and ``all'' agree with previously
published numbers of modular lattices to 33
elements~\cite{kohonen2018,a006981} and distributive lattices to 49
elements~\cite{erne2002} (except at $n=1$, because we count the
singleton as a vi-lattice).  The previous countings did not employ the
vertical 2-sum.


\section{Concluding remarks}

One of our stated goals was to create more compact lattice listings by
leaving out all compositions (vertical 2-sums of smaller lattices).
As~seen in Tables~\ref{table:modular} and~\ref{table:distributive},
this was more successful with distributive lattices than with
modulars.  Compositions make up $79\%$ of the modular vi-lattices of
$35$~elements, and $99.6\%$ of the distributive vi-lattices of
$60$~elements.

The observed growths of the numbers of modular and distributive
vi-lattices, to~the extent that they are now known, are illustrated in
Figure~\ref{fig:ratioplots}.  In modular vi-lattices our lower bound
$\Omega(2.3122^n)$ seems loose; the observed ratios keep increasing,
hinting perhaps of~a (very slightly) superexponential growth.  We note
that no~exponential upper bound is currently known on the number of
modular lattices.  In distributive vi-lattices the observed ratios
seem to be converging, and our lower bound $\Omega(1.7250^n)$ seems
pretty good.

\begin{figure}[tb]
  \includegraphics[width=\textwidth]{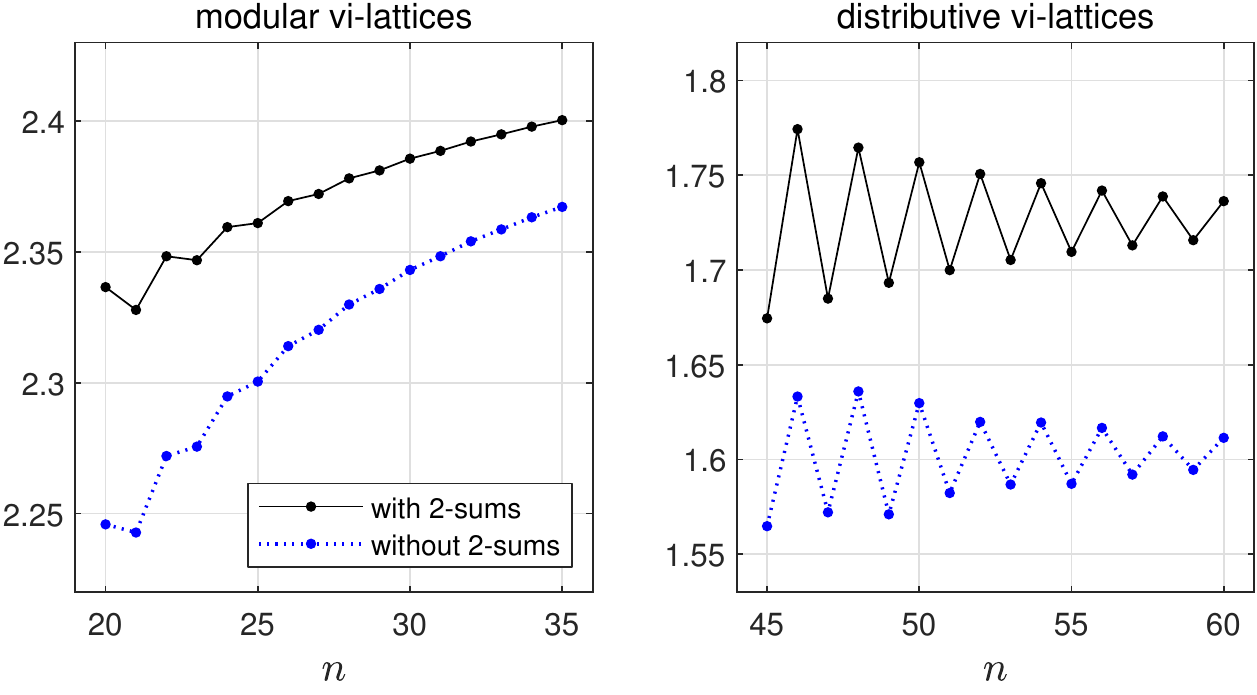}
  \caption{Ratio between the numbers of nonisomorphic vi-lattices of
    $(n\!-\!1)$ and $n$ elements, with and without vertical 2-sums.}
  \label{fig:ratioplots}
\end{figure}

Ern\'{e} et al.\ have shown an upper bound of $O(2.33^n)$ on
nonisomorphic distributive vi-lattices~\cite{erne2002}.  Our improved
lower bound $\Omega(2.3122^n)$ on nonisomorphic modular vi-lattices is
still not enough to separate the growth rates of these two families.
To~close the gap there are different options.  We could count modular
pieces further.  Empirically, adding one element increases the base in
our lower bound by $0.0048$ (but the increase diminishes as $n$
grows).  Counting the pieces up~to $40$ elements would probably raise
the lower bound above $\Omega(2.33^n)$.  But this would take about
$10\;000$ cpu-core-days with the current lattice-generating program,
and was deemed not worth the effort.  Improving the algorithm or the
lower bound techniques might be a better idea.  Another option is to
improve the upper bound on distributive vi-lattices.  Indeed, Ern\'{e}
et~al.\ note that ``with more effort'' it might be improved
considerably, at least to~$2.28^n$.  Combined with our lower bound,
this would suffice to separate the growth rates.

Although our lower bounds are based on large computations, we must
point~out that proper analysis of symmetry is the key to good lower
bounds.  Indeed, using all our data on distributive lattices ($n\le
60$), if we ignore the symmetry cases (and lose the factors of~2 in
Theorem~\ref{thm:main}), we only obtain a bound of $\Omega(1.6213^n)$
for distributive vi-lattices.  In contrast, using just the
distributive middle pieces of $n \le 21$ elements (a truly modest
collection of 134 lattices), our symmetry-distinguishing method
already gives $\Omega(1.6818^n)$.

Accompanying program code is available in Bitbucket~\cite{bitbucket}.
This includes C~programs to generate the pieces and the specials, to
classify and count them by symmetry type, and to perform the Cartesian
counting.  Also included is SageMath code for verifying the
exponential lower bounds.  The~lattice listings (pieces and specials
only) were stored in digraph6 format and compressed with~xz.  The
compressed listings take about 167~GB of disk space, and will be
available in~\cite{eudat}.


\begin{table}
  \caption{Number of modular lattice up to isomorphism.}
  \label{table:modular}
  \small
\setlength{\tabcolsep}{4.5pt}
\begin{tabular}{@{}lrrrrrrrrrrrrrrrr@{}}
\toprule
& $n$ \\
type &            1 &            2 &            3 &            4 &            5 &            6 &            7 &            8 &            9 &           10 &           11 &           12 &           13 &           14 &           15 &           16 \\
\midrule
\addlinespace
  MF &            0 &            0 &            0 &            0 &            0 &            1 &            0 &            0 &            2 &            3 &            3 &           13 &           24 &           48 &          105 &          242 \\  MA &            0 &            0 &            0 &            0 &            0 &            0 &            0 &            0 &            0 &            0 &            0 &            1 &            1 &            2 &            5 &            7 \\  MC &            0 &            0 &            0 &            0 &            0 &            0 &            0 &            0 &            0 &            0 &            0 &            1 &            1 &            2 &            5 &            7 \\  MX &            0 &            0 &            0 &            0 &            0 &            0 &            0 &            0 &            0 &            0 &            0 &            0 &            0 &            0 &            0 &            1 \\  MH &            0 &            0 &            0 &            0 &            0 &            0 &            0 &            0 &            1 &            0 &            1 &            0 &            1 &            0 &            2 &            2 \\  BF &            0 &            0 &            0 &            0 &            0 &            0 &            1 &            1 &            1 &            4 &            6 &           11 &           25 &           56 &          113 &          257 \\  BS &            0 &            0 &            0 &            0 &            0 &            0 &            0 &            0 &            0 &            1 &            1 &            2 &            4 &            5 &            9 &           15 \\  TF &            0 &            0 &            0 &            0 &            0 &            0 &            1 &            1 &            1 &            4 &            6 &           11 &           25 &           56 &          113 &          257 \\  TS &            0 &            0 &            0 &            0 &            0 &            0 &            0 &            0 &            0 &            1 &            1 &            2 &            4 &            5 &            9 &           15 \\\addlinespace
  CF &            0 &            0 &            0 &            0 &            0 &            0 &            0 &            2 &            2 &            6 &           16 &           38 &           80 &          208 &          464 &         1115 \\
  CS &            0 &            0 &            0 &            0 &            0 &            0 &            0 &            0 &            0 &            0 &            0 &            0 &            0 &            3 &            5 &           15 \\
  CN &            0 &            0 &            0 &            0 &            0 &            0 &            0 &            0 &            2 &            4 &           10 &           28 &           66 &          154 &          375 &          884 \\
\midrule
special  &            1 &            1 &            0 &            1 &            1 &            1 &            1 &            3 &            3 &            5 &           10 &           20 &           35 &           75 &          151 &          317 \\pieces &            0 &            0 &            0 &            0 &            0 &            1 &            2 &            2 &            5 &           13 &           18 &           41 &           85 &          174 &          361 &          803 \\
compos. &            0 &            0 &            0 &            0 &            0 &            0 &            0 &            2 &            4 &           10 &           26 &           66 &          146 &          365 &          844 &         2014 \\
\midrule
vi-latt. &            1 &            1 &            0 &            1 &            1 &            2 &            3 &            7 &           12 &           28 &           54 &          127 &          266 &          614 &         1356 &         3134 \\
all &            1 &            1 &            1 &            2 &            4 &            8 &           16 &           34 &           72 &          157 &          343 &          766 &         1718 &         3899 &         8898 &        20475 \\
\bottomrule\end{tabular}  
\\[1cm]
\setlength{\tabcolsep}{5pt}
\begin{tabular}{@{}lrrrrrrrr@{}}
\toprule
& $n$ \\
type &           17 &           18 &           19 &           20 &           21 &           22 &           23 &           24 \\
\midrule
\addlinespace
  MF &          518 &         1185 &         2664 &         6092 &        13849 &        31932 &        73458 &       170112 \\  MA &           15 &           28 &           61 &          122 &          270 &          570 &         1259 &         2729 \\  MC &           15 &           28 &           61 &          122 &          270 &          570 &         1259 &         2729 \\  MX &            1 &            4 &            5 &           11 &           18 &           35 &           63 &          124 \\  MH &            5 &            8 &           11 &           19 &           22 &           43 &           51 &          105 \\  BF &          557 &         1250 &         2763 &         6267 &        14125 &        32225 &        73561 &       169304 \\  BS &           30 &           52 &          109 &          207 &          422 &          835 &         1721 &         3544 \\  TF &          557 &         1250 &         2763 &         6267 &        14125 &        32225 &        73561 &       169304 \\  TS &           30 &           52 &          109 &          207 &          422 &          835 &         1721 &         3544 \\\addlinespace
  CF &         2580 &         6156 &        14382 &        34236 &        80703 &       192141 &       455548 &      1086269 \\
  CS &           35 &           84 &          191 &          457 &         1054 &         2482 &         5795 &        13601 \\
  CN &         2091 &         4959 &        11736 &        27832 &        66009 &       156845 &       372956 &       888193 \\
\midrule
special  &          657 &         1426 &         3074 &         6783 &        15006 &        33707 &        75944 &       172893 \\pieces &         1728 &         3857 &         8546 &        19314 &        43523 &        99270 &       226654 &       521495 \\
compos. &         4706 &        11199 &        26309 &        62525 &       147766 &       351468 &       834299 &      1988063 \\
\midrule
vi-latt. &         7091 &        16482 &        37929 &        88622 &       206295 &       484445 &      1136897 &      2682451 \\
all &        47321 &       110024 &       256791 &       601991 &      1415768 &      3340847 &      7904700 &     18752943 \\
\bottomrule\end{tabular}
\end{table}

\begin{table}
  \caption*{{\sc Table 1} (continued). Number of modular lattice up to isomorphism.}
  \small
\setlength{\tabcolsep}{5pt}
\begin{tabular}{@{}lrrrrrr@{}}
\toprule
& $n$ \\
type &           25 &           26 &           27 &           28 &           29 &           30 \\
\midrule
\addlinespace
  MF &       394356 &       918597 &      2142885 &      5016593 &     11766661 &     27673169 \\  MA &         6054 &        13395 &        29981 &        67308 &       152290 &       345897 \\  MC &         6054 &        13395 &        29981 &        67308 &       152290 &       345897 \\  MX &          239 &          474 &          945 &         1911 &         3917 &         8094 \\  MH &          148 &          290 &          454 &          826 &         1359 &         2352 \\  BF &       390258 &       904769 &      2102583 &      4905597 &     11472236 &     26908706 \\  BS &         7475 &        15902 &        34379 &        75030 &       165752 &       369140 \\  TF &       390258 &       904769 &      2102583 &      4905597 &     11472236 &     26908706 \\  TS &         7475 &        15902 &        34379 &        75030 &       165752 &       369140 \\\addlinespace
  CF &      2586652 &      6179943 &     14763845 &     35347971 &     84670699 &    203133686 \\
  CS &        31931 &        75120 &       176999 &       417863 &       988002 &      2340245 \\
  CN &      2117276 &      5054559 &     12078748 &     28902161 &     69228582 &    166012187 \\
\midrule
special  &       395073 &       908830 &      2098043 &      4866320 &     11320574 &     26427788 \\pieces &      1202317 &      2787493 &      6478170 &     15115200 &     35352493 &     82931101 \\
compos. &      4735859 &     11309622 &     27019592 &     64667995 &    154887283 &    371486118 \\
\midrule
vi-latt. &      6333249 &     15005945 &     35595805 &     84649515 &    201560350 &    480845007 \\
all &     44588803 &    106247120 &    253644319 &    606603025 &   1453029516 &   3485707007 \\
\bottomrule\end{tabular}
  \\[1cm]
\setlength{\tabcolsep}{5pt}
\begin{tabular}{@{}lrrrrr@{}}
\toprule
& $n$ \\
type &           31 &           32 &           33 &           34 &           35 \\
\midrule
\addlinespace
  MF &     65203834 &    153963391 &    364151886 &    862779754 &   2047145114 \\  MA &       790496 &      1813615 &      4180886 &      9673363 &     22467366 \\  MC &       790496 &      1813615 &      4180886 &      9673363 &     22467366 \\  MX &        16975 &        35876 &        76749 &       165615 &       360878 \\  MH &         3958 &         6696 &        11466 &        19465 &        33807 \\  BF &     63245392 &    148991342 &    351620380 &    831365583 &   1968780807 \\  BS &       829576 &      1877307 &      4277558 &      9800078 &     22571155 \\  TF &     63245392 &    148991342 &    351620380 &    831365583 &   1968780807 \\  TS &       829576 &      1877307 &      4277558 &      9800078 &     22571155 \\\addlinespace
  CF &    487682310 &   1172237243 &   2819860668 &   6789965627 &  16361898245 \\
  CS &      5551716 &     13191092 &     31388574 &     74798062 &    178482514 \\
  CN &    398494238 &    957517799 &   2302844911 &   5543373958 &  13354884177 \\
\midrule
special  &     61853133 &    145160950 &    341431589 &    804878006 &   1901058538 \\pieces &    194955695 &    459370491 &   1084397749 &   2564642882 &   6075178455 \\
compos. &    891728264 &   2142946134 &   5154094153 &  12408137647 &  29895264936 \\
\midrule
vi-latt. &   1148537092 &   2747477575 &   6579923491 &  15777658535 &  37871501929 \\
all &   8373273835 &  20139498217 &  48496079939 & 116905715114 & 282098869730 \\
\bottomrule\end{tabular}
\end{table}


\begin{table}
  \caption{Number of distributive lattices up to isomorphism.}
  \label{table:distributive}
  \small
\setlength{\tabcolsep}{4.5pt}
\begin{tabular}{@{}lrrrrrrrrrrrrrrrrr@{}}
\toprule
& $n$ \\
type &            1 &            2 &            3 &            4 &            5 &            6 &            7 &            8 &            9 &           10 &           11 &           12 &           13 &           14 &           15 &           16 &           17 \\
\midrule
\addlinespace
  MF &            0 &            0 &            0 &            0 &            0 &            1 &            0 &            0 &            0 &            0 &            0 &            3 &            0 &            0 &            4 &            5 &            4 \\  MA &            0 &            0 &            0 &            0 &            0 &            0 &            0 &            0 &            0 &            0 &            0 &            0 &            0 &            0 &            1 &            0 &            0 \\  MC &            0 &            0 &            0 &            0 &            0 &            0 &            0 &            0 &            0 &            0 &            0 &            0 &            0 &            0 &            1 &            0 &            0 \\  MX &            0 &            0 &            0 &            0 &            0 &            0 &            0 &            0 &            0 &            0 &            0 &            0 &            0 &            0 &            0 &            0 &            0 \\  MH &            0 &            0 &            0 &            0 &            0 &            0 &            0 &            0 &            1 &            0 &            0 &            0 &            0 &            0 &            0 &            1 &            0 \\  BF &            0 &            0 &            0 &            0 &            0 &            0 &            0 &            0 &            0 &            1 &            0 &            0 &            0 &            2 &            1 &            3 &            1 \\  BS &            0 &            0 &            0 &            0 &            0 &            0 &            0 &            0 &            0 &            0 &            0 &            0 &            1 &            0 &            0 &            0 &            1 \\  TF &            0 &            0 &            0 &            0 &            0 &            0 &            0 &            0 &            0 &            1 &            0 &            0 &            0 &            2 &            1 &            3 &            1 \\  TS &            0 &            0 &            0 &            0 &            0 &            0 &            0 &            0 &            0 &            0 &            0 &            0 &            1 &            0 &            0 &            0 &            1 \\\addlinespace
  CF &            0 &            0 &            0 &            0 &            0 &            0 &            0 &            2 &            0 &            4 &            2 &           10 &            6 &           32 &           18 &           83 &           74 \\
  CS &            0 &            0 &            0 &            0 &            0 &            0 &            0 &            0 &            0 &            0 &            0 &            0 &            0 &            1 &            0 &            0 &            2 \\
  CN &            0 &            0 &            0 &            0 &            0 &            0 &            0 &            0 &            0 &            0 &            0 &            2 &            0 &            4 &            2 &           14 &            8 \\
\midrule
special  &            1 &            1 &            0 &            1 &            0 &            0 &            0 &            1 &            0 &            0 &            0 &            1 &            0 &            1 &            0 &            3 &            1 \\pieces &            0 &            0 &            0 &            0 &            0 &            1 &            0 &            0 &            1 &            2 &            0 &            3 &            2 &            4 &            8 &           12 &            8 \\
compos. &            0 &            0 &            0 &            0 &            0 &            0 &            0 &            2 &            0 &            4 &            2 &           12 &            6 &           37 &           20 &           97 &           84 \\
\midrule
vi-latt. &            1 &            1 &            0 &            1 &            0 &            1 &            0 &            3 &            1 &            6 &            2 &           16 &            8 &           42 &           28 &          112 &           93 \\
all &            1 &            1 &            1 &            2 &            3 &            5 &            8 &           15 &           26 &           47 &           82 &          151 &          269 &          494 &          891 &         1639 &         2978 \\
\bottomrule\end{tabular}\\[1cm]
\setlength{\tabcolsep}{5pt}
\begin{tabular}{@{}lrrrrrrrrr@{}}
\toprule
& $n$ \\
type &           18 &           19 &           20 &           21 &           22 &           23 &           24 &           25 &           26 \\
\midrule
\addlinespace
  MF &           16 &           10 &           29 &           49 &           63 &           94 &          213 &          219 &          459 \\  MA &            0 &            1 &            1 &            2 &            1 &            3 &            3 &           10 &            8 \\  MC &            0 &            1 &            1 &            2 &            1 &            3 &            3 &           10 &            8 \\  MX &            0 &            0 &            0 &            0 &            0 &            0 &            1 &            0 &            1 \\  MH &            1 &            0 &            0 &            0 &            1 &            1 &            0 &            3 &            2 \\  BF &            8 &            7 &           16 &           16 &           40 &           38 &          102 &          116 &          229 \\  BS &            0 &            1 &            1 &            2 &            1 &            5 &            2 &            8 &            4 \\  TF &            8 &            7 &           16 &           16 &           40 &           38 &          102 &          116 &          229 \\  TS &            0 &            1 &            1 &            2 &            1 &            5 &            2 &            8 &            4 \\\addlinespace
  CF &          230 &          233 &          672 &          726 &         1928 &         2342 &         5516 &         7280 &        16178 \\
  CS &            1 &            5 &            2 &           14 &            9 &           37 &           27 &           99 &           95 \\
  CN &           41 &           27 &          120 &          104 &          343 &          347 &         1005 &         1119 &         2953 \\
\midrule
special  &            6 &            2 &           10 &            6 &           26 &           18 &           56 &           48 &          131 \\pieces &           33 &           28 &           65 &           89 &          148 &          187 &          428 &          490 &          944 \\
compos. &          272 &          265 &          794 &          844 &         2280 &         2726 &         6548 &         8498 &        19226 \\
\midrule
vi-latt. &          311 &          295 &          869 &          939 &         2454 &         2931 &         7032 &         9036 &        20301 \\
all &         5483 &        10006 &        18428 &        33749 &        62162 &       114083 &       210189 &       386292 &       711811 \\
\bottomrule\end{tabular}
\end{table}

\begin{table}
  \caption*{{\sc Table 2} (continued). Number of distributive lattices up to isomorphism.}
  \small
\setlength{\tabcolsep}{5pt}
\begin{tabular}{@{}lrrrrrrr@{}}
\toprule
& $n$ \\
type &           27 &           28 &           29 &           30 &           31 &           32 &           33 \\
\midrule
\addlinespace
  MF &          726 &         1099 &         1691 &         3112 &         4176 &         7573 &        11728 \\  MA &           15 &           14 &           50 &           47 &           87 &          121 &          227 \\  MC &           15 &           14 &           50 &           47 &           87 &          121 &          227 \\  MX &            0 &            2 &            0 &            4 &            3 &            5 &            4 \\  MH &            1 &            2 &            2 &            8 &            2 &           10 &            6 \\  BF &          303 &          596 &          749 &         1513 &         2033 &         3647 &         5316 \\  BS &           16 &           11 &           32 &           25 &           59 &           62 &          151 \\  TF &          303 &          596 &          749 &         1513 &         2033 &         3647 &         5316 \\  TS &           16 &           11 &           32 &           25 &           59 &           62 &          151 \\\addlinespace
  CF &        22302 &        47348 &        68582 &       138752 &       208961 &       409676 &       632745 \\
  CS &          281 &          301 &          789 &          926 &         2307 &         2865 &         6611 \\
  CN &         3594 &         8607 &        11348 &        25363 &        35198 &        74935 &       108658 \\
\midrule
special  &          129 &          328 &          339 &          769 &          914 &         1913 &         2371 \\pieces &         1395 &         2345 &         3355 &         6294 &         8539 &        15248 &        23126 \\
compos. &        26177 &        56256 &        80719 &       165041 &       246466 &       487476 &       748014 \\
\midrule
vi-latt. &        27701 &        58929 &        84413 &       172104 &       255919 &       504637 &       773511 \\
all &      1309475 &      2413144 &      4442221 &      8186962 &     15077454 &     27789108 &     51193086 \\
\bottomrule\end{tabular}
  \\[1cm]
\setlength{\tabcolsep}{5pt}
\begin{tabular}{@{}lrrrrrr@{}}
\toprule
& $n$ \\
type &           34 &           35 &           36 &           37 &           38 &           39 \\
\midrule
\addlinespace
  MF &        18593 &        29332 &        49894 &        73906 &       125464 &       196346 \\  MA &          279 &          584 &          732 &         1333 &         1963 &         3362 \\  MC &          279 &          584 &          732 &         1333 &         1963 &         3362 \\  MX &           18 &           15 &           20 &           22 &           71 &           68 \\  MH &           16 &            9 &           16 &           26 &           31 &           40 \\  BF &         9431 &        13450 &        24024 &        35267 &        60195 &        91542 \\  BS &          147 &          317 &          369 &          755 &          927 &         1833 \\  TF &         9431 &        13450 &        24024 &        35267 &        60195 &        91542 \\  TS &          147 &          317 &          369 &          755 &          927 &         1833 \\\addlinespace
  CF &      1211099 &      1914417 &      3583636 &      5772993 &     10632469 &     17361550 \\
  CS &         8863 &        19257 &        27094 &        56362 &        82534 &       165301 \\
  CN &       221306 &       333256 &       655975 &      1014990 &      1947706 &      3081099 \\
\midrule
special  &         4783 &         6192 &        11888 &        16279 &        29902 &        42083 \\pieces &        38341 &        58058 &       100180 &       148664 &       251736 &       389928 \\
compos. &      1441268 &      2266930 &      4266705 &      6844345 &     12662709 &     20607950 \\
\midrule
vi-latt. &      1484392 &      2331180 &      4378773 &      7009288 &     12944347 &     21039961 \\
all &     94357143 &    173859936 &    320462062 &    590555664 &   1088548290 &   2006193418 \\
\bottomrule\end{tabular}
\end{table}

\begin{table}
  \caption*{{\sc Table 2} (continued). Number of distributive lattices up to isomorphism.}
  \small
\setlength{\tabcolsep}{5pt}
\begin{tabular}{@{}lrrrrr@{}}
\toprule
& $n$ \\
type &           40 &           41 &           42 &           43 &           44 \\
\midrule
\addlinespace
  MF &       316251 &       501232 &       824706 &      1277065 &      2104201 \\  MA &         4763 &         8706 &        12369 &        21206 &        32381 \\  MC &         4763 &         8706 &        12369 &        21206 &        32381 \\  MX &          137 &          132 &          314 &          389 &          739 \\  MH &           39 &           72 &           82 &          100 &          131 \\  BF &       154501 &       234005 &       395667 &       606372 &      1005172 \\  BS &         2286 &         4424 &         5957 &        10857 &        15117 \\  TF &       154501 &       234005 &       395667 &       606372 &      1005172 \\  TS &         2286 &         4424 &         5957 &        10857 &        15117 \\\addlinespace
  CF &     31576370 &     52152822 &     93836823 &    156430892 &    279134095 \\
  CS &       250864 &       486894 &       758311 &      1439247 &      2287530 \\
  CN &      5786183 &      9323041 &     17211623 &     28123776 &     51240833 \\
\midrule
special  &        75946 &       109160 &       191940 &       283583 &       488243 \\pieces &       639527 &       995706 &      1653088 &      2554424 &      4210411 \\
compos. &     37613417 &     61962757 &    111806757 &    185993915 &    332662458 \\
\midrule
vi-latt. &     38328890 &     63067623 &    113651785 &    188831922 &    337361112 \\
all &   3697997558 &   6815841849 &  12563729268 &  23157428823 &  42686759863 \\
\bottomrule\end{tabular}
  \\[1cm]
\setlength{\tabcolsep}{5pt}
\begin{tabular}{@{}lrrrr@{}}
\toprule
& $n$ \\
type &           45 &           46 &           47 &           48 \\
\midrule
\addlinespace
  MF &      3324318 &      5359557 &      8530466 &     13845649 \\  MA &        54061 &        81642 &       139008 &       210288 \\  MC &        54061 &        81642 &       139008 &       210288 \\  MX &          858 &         1710 &         2298 &         4073 \\  MH &          157 &          263 &          252 &          415 \\  BF &      1566178 &      2579549 &      4019531 &      6615167 \\  BS &        27265 &        38591 &        68176 &        99857 \\  TF &      1566178 &      2579549 &      4019531 &      6615167 \\  TS &        27265 &        38591 &        68176 &        99857 \\\addlinespace
  CF &    468610361 &    830767951 &   1402696804 &   2473422299 \\
  CS &      4259645 &      6888356 &     12629975 &     20697992 \\
  CN &     84668721 &    152604200 &    254469592 &    454703002 \\
\midrule
special  &       731917 &      1246418 &      1890209 &      3178981 \\pieces &      6620341 &     10761094 &     16986446 &     27700761 \\
compos. &    557538727 &    990260507 &   1669796371 &   2948823293 \\
\midrule
vi-latt. &    564890985 &   1002268019 &   1688673026 &   2979703035 \\
all &  78682454720 & 145038561665 & 267348052028 & 492815778109 \\
\bottomrule\end{tabular}
\end{table}

\begin{table}
  \caption*{{\sc Table 2} (continued). Number of distributive lattices up to isomorphism.}
  \small
\setlength{\tabcolsep}{5pt}
\begin{tabular}{@{}lrrrr@{}}
\toprule
& $n$ \\
type &           49 &           50 &           51 &           52 \\
\midrule
\addlinespace
  MF &     21848698 &     35484402 &     56423044 &     90846703 \\  MA &       349946 &       545640 &       894103 &      1392365 \\  MC &       349946 &       545640 &       894103 &      1392365 \\  MX &         5652 &        10074 &        14075 &        24970 \\  MH &          440 &          697 &          770 &         1034 \\  BF &     10365640 &     16917992 &     26705669 &     43421020 \\  BS &       171474 &       256172 &       436542 &       657562 \\  TF &     10365640 &     16917992 &     26705669 &     43421020 \\  TS &       171474 &       256172 &       436542 &       657562 \\\addlinespace
  CF &   4195545640 &   7366918781 &  12541052681 &  21947209314 \\
  CS &     37503756 &     62106875 &    111453001 &    186162978 \\
  CN &    763638695 &   1355285715 &   2289013709 &   4040233169 \\
\midrule
special  &      4883596 &      8125938 &     12584095 &     20810796 \\pieces &     43628910 &     70934781 &    112510517 &    181814601 \\
compos. &   4996688091 &   8784311371 &  14941519391 &  26173605461 \\
\midrule
vi-latt. &   5045200597 &   8863372090 &  15066614003 &  26376230858 \\
all & 908414736485 & 1674530991462 & 3086717505436 & 5689930182502 \\
\bottomrule\end{tabular}
  \\[1cm]
\setlength{\tabcolsep}{5pt}
\begin{tabular}{@{}lrrrr@{}}
\toprule
& $n$ \\
type &           53 &           54 &           55 &           56 \\
\midrule
\addlinespace
  MF &    144993779 &    233835914 &    372140014 &    600341635 \\  MA &      2298377 &      3582354 &      5856047 &      9243133 \\  MC &      2298377 &      3582354 &      5856047 &      9243133 \\  MX &        35929 &        61578 &        91665 &       155593 \\  MH &         1401 &         1762 &         2365 &         2920 \\  BF &     68622251 &    111467609 &    176619879 &    285832100 \\  BS &      1108440 &      1696359 &      2822398 &      4362272 \\  TF &     68622251 &    111467609 &    176619879 &    285832100 \\  TS &      1108440 &      1696359 &      2822398 &      4362272 \\\addlinespace
  CF &  37469608053 &  65395110178 & 111905017483 & 194884875094 \\
  CS &    331472539 &    557422364 &    986467033 &   1667764379 \\
  CN &   6854924656 &  12046207362 &  20511977357 &  35920244327 \\
\midrule
special  &     32424737 &     53285185 &     83549296 &    136565579 \\pieces &    289089245 &    467391898 &    742830692 &   1199375158 \\
compos. &  44656005248 &  77998739904 & 133403461873 & 232472883800 \\
\midrule
vi-latt. &  44977519230 &  78519416987 & 134229841861 & 233808824537 \\
all & 10488501786986 & 19334113091637 & 35639590512519 & 65696773057331 \\
\bottomrule\end{tabular}
\end{table}

\begin{table}
  \caption*{{\sc Table 2} (continued). Number of distributive lattices up to isomorphism.}
  \small
\setlength{\tabcolsep}{5pt}
\begin{tabular}{@{}lrrrr@{}}
\toprule
& $n$ \\
type &           57 &           58 &           59 &           60 \\
\midrule
\addlinespace
  MF &    958148836 &   1540236160 &   2462775718 &   3959945640 \\  MA &     15005164 &     23705048 &     38546064 &     60946820 \\  MC &     15005164 &     23705048 &     38546064 &     60946820 \\  MX &       231975 &       392608 &       596474 &       990499 \\  MH &         3980 &         5148 &         6470 &         8675 \\  BF &    454454916 &    733959291 &   1168085737 &   1885053587 \\  BS &      7218116 &     11214722 &     18454173 &     28883114 \\  TF &    454454916 &    733959291 &   1168085737 &   1885053587 \\  TS &      7218116 &     11214722 &     18454173 &     28883114 \\\addlinespace
  CF & 334097123844 & 580839511384 & 997199063829 & 1731270488614 \\
  CS &   2936919383 &   4986729668 &   8746804291 &  14902405273 \\
  CN &  61338845389 & 107114456244 & 183330850349 & 319426226966 \\
\midrule
special  &    215048026 &    350313997 &    553415624 &    898644768 \\pieces &   1911741183 &   3078392038 &   4913550610 &   7910711856 \\
compos. & 398372888616 & 692940697296 & 1189276718469 & 2065599120853 \\
\midrule
vi-latt. & 400499677825 & 696369403331 & 1194743684703 & 2074408477477 \\
all & 121102696325898 & 223236665889804 & 411506035223499 & 758556959660012 \\
\bottomrule\end{tabular}
\end{table}

\subsection*{Acknowledgments}

Computational resources were provided by CSC -- IT Center for Science
and by the Aalto Science-IT project.


\bibliographystyle{plain}
\bibliography{refs}

\end{document}